\documentclass[12pt]{amsart}
\usepackage{amscd,amsmath,amsthm,amssymb,graphics}

\usepackage{pgf,tikz, pifont}

\usetikzlibrary{arrows}

\usepackage[all]{xy}

\def\NZQ{\Bbb}               
\def\NN{{\NZQ N}}

\def\frk{\frak}               

\def\Phi{{\frk n}}
\def\Phi{{\frk N}}
\def\opn#1#2{\def#1{\operatorname{#2}}} 
\opn\chara{char} \opn\length{\ell} \opn\pd{pd} \opn\rk{rk}
\opn\projdim{proj\,dim} \opn\injdim{inj\,dim} \opn\rank{rank}
\opn\depth{depth} \opn\grade{grade} \opn\height{height}
\opn\embdim{emb\,dim} \opn\codim{codim}

\opn\Tr{Tr} \opn\bigrank{big\,rank}
\opn\superheight{superheight}\opn\lcm{lcm}
\opn\trdeg{tr\,deg}
\opn\reg{reg} \opn\lreg{lreg} \opn\ini{in} \opn\lpd{lpd}
\opn\size{size}\opn\bigsize{bigsize}
\opn\cosize{cosize}\opn\bigcosize{bigcosize}
\opn\sdepth{sdepth}\opn\sreg{sreg}
\opn\link{link}\opn\fdepth{fdepth}
\opn\index{index}
\opn\index{index}
\opn\indeg{indeg}
\opn\N{N}
\opn\SSC{SSC}
\opn\SC{SC}
\opn\conv{conv}
\opn\div{div} \opn\Div{Div} \opn\cl{cl} \opn\Cl{Cl}
\opn\Spec{Spec} \opn\Supp{Supp} \opn\supp{supp} \opn\Sing{Sing}
\opn\Ass{Ass} \opn\Min{Min}\opn\Mon{Mon} \opn\dstab{dstab} \opn\astab{astab}
\opn\Syz{Syz}
\opn\reg{reg}
\opn\Ann{Ann} \opn\Rad{Rad} \opn\Soc{Soc}
\opn\Im{Im} \opn\Ker{Ker} \opn\Coker{Coker} \opn\Am{Am}
\opn\Hom{Hom} \opn\Tor{Tor} \opn\Ext{Ext} \opn\End{End}
\opn\Aut{Aut} \opn\id{id}

\opn\nat{nat}
\opn\pff{pf}
\opn\Pf{Pf} \opn\GL{GL} \opn\SL{SL} \opn\mod{mod} \opn\ord{ord}
\opn\Gin{Gin} \opn\Hilb{Hilb}\opn\sort{sort}
\opn\initial{init}
\opn\ende{end}
\opn\height{height}
\opn\type{type}
\opn\aff{aff} \opn\con{conv} \opn\relint{relint} \opn\st{st}
\opn\lk{lk} \opn\cn{cn} \opn\core{core} \opn\vol{vol}
\opn\link{link} \opn\star{star}\opn\lex{lex}\opn\Mon{Mon}\opn\Min{Min}
\opn\gr{gr}

\def\pot#1#2{#1[\kern-0.28ex[#2]\kern-0.28ex]}

\opn\dirlim{\underrightarrow{\lim}}
\opn\inivlim{\underleftarrow{\lim}}
\let\to=\rightarrow

\def\Implies{\ifmmode\Longrightarrow \else
        \unskip${}\Longrightarrow{}$\ignorespaces\fi}
\def\implies{\ifmmode\Rightarrow \else
        \unskip${}\Rightarrow{}$\ignorespaces\fi}
\def\iff{\ifmmode\Longleftrightarrow \else
        \unskip${}\Longleftrightarrow{}$\ignorespaces\fi}

\let\:=\colon
\newtheorem{Theorem}{Theorem}[section]
 \newtheorem{Lemma}[Theorem]{Lemma}
 \newtheorem{Corollary}[Theorem]{Corollary}
 \newtheorem{Proposition}[Theorem]{Proposition}
 \newtheorem{Remark}[Theorem]{Remark}
 
 \newtheorem{Example}[Theorem]{Example}
 
 \newtheorem{Definition}[Theorem]{Definition}
 \newtheorem*{Definition*}{Definition}

 \newtheorem*{Conjecture*}{Conjecture}

\let\epsilon\varepsilon
\let\kappa=\varkappa
\def\qed{\ifhmode\textqed\fi
      \ifmmode\ifinner\quad\qedsymbol\else\dispqed\fi\fi}
\def\textqed{\unskip\nobreak\penalty50
       \hskip2em\hbox{}\nobreak\hfil\qedsymbol
       \parfillskip=0pt \finalhyphendemerits=0}
\def\dispqed{\rlap{\qquad\qedsymbol}}

\opn\dis{dis}
\def\pnt{{\raise0.5mm\hbox{\large\bf.}}}

\opn\Lex{Lex}

\begin{document}

 \title{Further results on monomial ideals of projective dimension one}

\author{Dancheng Lu, Jiawen Shan, Yu Wang}

\address{Dancheng Lu, School of Mathematical Sciences,\allowbreak
Soochow University,\allowbreak
215006 Suzhou,\allowbreak
P.R.China}
\email{ludancheng@suda.edu.cn}

\address{Jiawen Shan, School of Mathematics and systems Science, \allowbreak
Shenyang Normal University,\allowbreak 110034
Shenyang ,\allowbreak
P.R.China}
\email{ShanJiawen@outlook.com}

\address{Yu Wang, School of Mathematical Sciences,\allowbreak
Soochow University,\allowbreak
215006 Suzhou,\allowbreak
P.R.China}
\email{20244207009@stu.suda.edu.cn}

\begin{abstract}
We prove that a monomial ideal has projective dimension one if and only if its minimal monomial generators can be ordered so that each successive colon ideal is principal, and show that this characterization is equivalent to the monomial version of the Hilbert-Burch Lemma. Furthermore, we prove that any squarefree monomial ideal of projective dimension one with a linear resolution has the property that all its powers \(I^s\) admit linear quotients, and we provide a partial classification of such ideals.
\end{abstract}
 \maketitle

 \section{Introduction}

	The monomial ideals of projective dimension 1 have been studied extensively.  It was shown in \cite{HHZ}, that a squarefree monomial ideal \(I\) has projective dimension \(\leq 1\) if and only if the simplicial complex \(\Delta:=\{F\subseteq [n] \mid \mathrm{x}_{[n]\setminus F}\in I\}\) is a quasi-tree. In \cite{FH}, it was proved in Theorem~27 that a monomial ideal \(I\) has projective dimension \(\leq 1\) if and only if \(I\) has a minimal free resolution supported on a graph tree. More recently, the minimal free resolutions of powers of squarefree monomial ideals \(I\) with projective dimension \(\leq 1\) have been studied in different directions in \cite{CFK} and \cite{CFK1} respectively.  In particular, \cite{CFK1} presents an explicit minimal free resolution for powers \(I^s\) of such ideals, supported on a polyhedral complex whose cells are cubes.

Let \(S = \mathbb{K}[x_1,\dots,x_n]\) be the polynomial ring over a field \(\mathbb{K}\). In Section 2, we prove that a monomial ideal \(I \subseteq S\) has projective dimension one if and only if its minimal generators admit a linear ordering \(u_1>u_2>\cdots>u_r\) such that the colon ideal \((u_1,\dots,u_{i-1})\colon u_i\) is principal for each \(i = 2,\dots,r\). This argument relies on the fact that any minimal free resolution of a monomial ideal of projective dimension one is supported on a tree graph, as established in \cite[Theorem~27]{FH}. In the final part of the paper, we further show that this characterization also follows from the Hilbert--Burch Lemma.

Let \(d\in \mathbb{Z}_{\geq 1}\). A monomial ideal \(I\) has a \(d\)-\emph{linear resolution} if \(\beta_{i,j}(I)=0\) for any pair \(i,j\) with \(j - i \neq d\). This is equivalent to \(I\) admitting a multigraded free resolution where the matrices of all differential maps have entries in the set of linear forms or 0, which justifies the name.
A monomial ideal \(I\) has linear quotients if there exists a linear ordering \(u_1>u_2>\cdots>u_r\) of its minimal generators such that \((u_1,\ldots,u_{i-1}):u_i\) is generated by variables for all \(i=2,\ldots,r\).
It was proved in \cite{HHBook} that if \(I\) is a graded ideal generated in a single degree with linear quotients, then \(I\) has a linear resolution. A monomial ideal $I$ is \emph{linearly related} if $\beta_{1,j}(I)=0$ for all $j>d(I)+1$, where $d(I)$ denotes the maximal degree of minimal generators of $I$.

In Section 3, we show that if \(I\) is a monomial ideal of projective dimension 1, then \(I\) has  linear quotients  whenever it has a linear resolution. Moreover, if we assume further that \(I\) is squarefree, then all its powers \(I^s\) have linear quotients.   We provide an example to illustrate that the squarefree assumption in the above result is indispensable.

To gain more insight into squarefree monomial ideals of projective dimension 1 with linear quotients, we introduce a type classification for such ideals. We explicitly describe type 1 and type 2 ideals, and show that type 2 ideals are essentially the complementary edge ideals of trees studied in \cite{HQM,FM1,LWZ}.

The Hilbert-Burch lemma, which governs the free resolutions of ideals with projective dimension one, has a long-standing research history.
In the monograph \cite{HHBook}, a version of the Hilbert-Burch lemma for monomial ideals is established. Nevertheless, its original proof is rather non-trivial and requires considerable preliminary knowledge. In Section 4, we  provide a more intuitive and purely combinatorial proof by our previous results. This proof further shows that our colon-ideal characterization of monomial ideals of projective dimension 1 is equivalent to the Hilbert-Burch lemma.

We close this section by fixing notation. Let \(S = \mathbb{K}[x_1,\dots,x_n]\) denote the polynomial ring over a field \(\mathbb{K}\). Given a monomial ideal \(I \subseteq S\), \(G(I)\) will stand for the set of its minimal monomial generators. For non-negative integers \(i,j\) and \(\alpha \in \mathbb{Z}_{\geq 0}^n\), the \(i\)-th total Betti number, the \(i\)-th graded Betti number in degree \(j\), and the \(i\)-th multigraded Betti number in multi-degree \(\alpha\) of \(I\) are denoted by \(\beta_i(I)\), \(\beta_{i,j}(I)\), and \(\beta_{i,\alpha}(I)\), respectively. We also denote the projective dimension of \(I\) by \(\operatorname{pd}(I)\).

For any monomial $u = x_1^{a_1}\cdots x_n^{a_n} \in S$, we define its total degree by $\deg(u) = a_1 + \cdots + a_n$, its degree with respect to the variable $x_i$ by $\deg_{x_i}(u) = a_i$, and its multi-degree by $\operatorname{mdeg}(u) = (a_1,\dots,a_n)$.
Furthermore, for monomials $u_1,\dots,u_r \in S$, $\operatorname{lcm}(u_1,\dots,u_r)$ and $\operatorname{gcd}(u_1,\dots,u_r)$ denote the least common multiple and greatest common divisor of $u_1,\dots,u_r$, respectively.

For a finite set $A$, $|A|$ denotes the cardinality of $A$.

\section{Colon characterization}

In this section, we apply Theorem 27 from \cite{FH} to establish equivalent characterizations for monomial ideals with projective dimension one. For this purpose, we first recall some necessary background preliminaries.

A \emph{simplicial complex} over the vertex set \([r]=\{1,2,\dots,r\}\) is a collection \(\Delta\) of subsets of \([r]\) such that (i) every singleton \(\{j\}\subseteq[r]\) belongs to \(\Delta\), and (ii) \(\Delta\) is closed under taking subsets. For any face \(F\in\Delta\), define its dimension by \(\dim F = |F|-1\). The empty face has dimension \(-1\), and \(\dim\Delta\) is the maximum dimension of all faces of \(\Delta\). Let \(\operatorname{Mon}(S)\) denote the set of all monomials of \(S\).
Given a map \(f\colon [r]\to \operatorname{Mon}(S)\) assigning a monomial to each vertex, we call \(\Delta\) an \(f\)-\emph{labeled simplicial complex}, denoted \((\Delta,f)\).
For a labeled simplicial complex \((\Delta,f)\), each face \(F\in\Delta\) is associated with a monomial \(u_F = \operatorname{lcm}\{f(k)\mid k\in F\}\), where the least common multiple is computed in the monomial lattice \(\operatorname{Mon}(S)\) of \(S\). With the above notation, we construct the multigraded free \(S\)-module complex \(\mathcal{C}_\bullet(\Delta; f)\) as follows:

For each integer \(i\geq -1\), set \[C_i = \bigoplus_{\substack{F\in\Delta\\ \dim F = i}} S e_F\] where \(e_F\) is the basis element indexed by \(F\); assign to \(e_F\) the multidegree \(\mathrm{mdeg} (e_F) = \mathrm{mdeg} (u_F)\) and set \(C_i=0\) whenever \(i>\dim\Delta\).
Endow \([r]\) with the standard linear order; for any \(F\in\Delta\) and \(k\in F\), let \(\sigma(k,F)\) denote the position of \(k\) in the increasingly ordered vertices of \(F\). For each \(i\ge 1\), define the \(S\)-linear differential \(\partial_i\colon C_i\to C_{i-1}\) by \[\partial_i(e_F) = \sum_{k\in F} (-1)^{\sigma(k,F)}\cdot \frac{u_F}{u_{F\setminus\{k\}}}\, e_{F\setminus\{k\}}\] for every \(i\)-dimensional face \(F\in\Delta\).
Identify \(C_{-1}=S\); the augmentation map \(\partial_0\colon C_0\to C_{-1}\) is given by \(\partial_0(e_{\{k\}}) = f(k)\) for all \(k\in[r]\). The resulting multigraded free \(S\)-module complex \(\mathcal{C}_\bullet(\Delta,f)\) is \[0 \to C_{d} \xrightarrow{\partial_{d}} C_{d-1} \xrightarrow{\partial_{d-1}} \cdots \xrightarrow{\partial_1} C_0 \xrightarrow{\partial_0} C_{-1} \to 0.\]

A \textit{simplicial tree} is a connected simplicial complex $\Delta$ such that every nonempty subcollection of the facets of $\Delta$ contains a \textit{leaf} of that subcollection, where a \textit{leaf} of a simplicial complex $\Gamma$ is a facet $F$ of $\Gamma$ for which there exists a facet $G\in\Gamma$, called a joint of $F$, such that $F\cap H\subseteq G$ for every facet $H\in\Gamma\setminus\{F\}$.

Every nonempty subcollection of a simplicial tree is itself a simplicial tree if and only if it is connected. The fact that simplicial trees have vanishing simplicial homologies yields the following result (see \cite[Theorem~3.2]{F}).

\begin{Proposition}\label{basic}
Let $(\Delta,f)$ be a labeled simplicial tree and $I=(f(1),\dots,f(r))$. If $G(I)=\{u_1,\dots,u_r\}$, let $L_I$ denote the lattice of elements $\mathrm{lcm}(\{u_i\mid i\in A\})$ for all $A\subseteq [r]$, ordered by divisibility. Then $\mathcal{C}_\bullet(\Delta,f)$ resolves $S/I$ if and only if for every $u\in L_I$, the induced subcomplex of $\Delta$ on $\{i\mid f(i)\mbox{ divides } u\}$ is connected.
\end{Proposition}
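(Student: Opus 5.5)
The proof goes through the standard criterion for when a labeled complex supports a resolution (Bayer--Peeva--Sturmfels), combined with the vanishing homology of simplicial trees. For a labeled complex $(\Delta,f)$ and a monomial $u$, write $\Delta_{\preceq u}$ for the induced subcomplex of $\Delta$ on $\{i\mid f(i) \text{ divides } u\}$; this is a subcomplex, since $u_F$ divides $u$ exactly when every $f(k)$ with $k\in F$ divides $u$. The criterion states that $\mathcal{C}_\bullet(\Delta,f)$ is exact, and so resolves $S/I$, if and only if $\Delta_{\preceq u}$ is acyclic over $\mathbb{K}$ for every monomial $u$: the empty complex counts as acyclic, and a nonempty one must have $\tilde H_i(\Delta_{\preceq u};\mathbb{K})=0$ for all $i$. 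The argument has three steps.

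\emph{Step 1 (reduce to $u\in L_I$).} Let $u$ be any monomial and set $u'=\lcm\{f(i)\mid f(i) \text{ divides } u\}$. Then $u'\in L_I$ (or $u'=1$ when the set is empty), and $\Delta_{\preceq u}=\Delta_{\preceq u'}$. So it suffices to check acyclicity for $u\in L_I$. If the labels $f(i)$ are not pairwise distinct, I would take $L_I$ to mean the lcm-lattice of the labels themselves; the argument does not change.

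\emph{Step 2 (acyclic is equivalent to connected for induced subcomplexes of a tree).} Let $W\subseteq[r]$ be nonempty and $\Delta_W$ the induced subcomplex. Acyclicity forces $\tilde H_0(\Delta_W)=0$, which is connectedness. For the converse, assume $\Delta_W$ is connected. The facets of $\Delta_W$ are the maximal sets among $F\cap W$, $F$ a facet of $\Delta$. The key claim is that an induced subcomplex of a simplicial tree is again a forest, hence a simplicial tree when connected. To prove it, take a subcollection of facets of $\Delta_W$, lift each to a facet $F$ of $\Delta$ with $F\cap W$ equal to it, find a leaf $F$ with joint $G$ in the lifted subcollection, and intersect with $W$: the relation $F\cap H\subseteq G$ becomes $(F\cap W)\cap(H\cap W)\subseteq G\cap W$. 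Once $\Delta_W$ is a tree, it has vanishing reduced homology by induction on the number of facets. Removing a leaf $F$ with joint $G$ gives $\Delta_W=\Delta'\cup\langle F\rangle$ with $\Delta'\cap\langle F\rangle=\langle F\cap G\rangle$ (when $F\cap G\neq\emptyset$), which is a simplex. Mayer--Vietoris then gives acyclicity, since $\Delta'$ is connected and is a tree by the remark in the paper. This is the fact cited from \cite[Theorem~3.2]{F}.

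\emph{Step 3 (conclusion).} For $u\in L_I$ with $u\neq 1$, the set $\{i\mid f(i)\text{ divides } u\}$ is nonempty. By Step 2, acyclicity of $\Delta_{\preceq u}$ is equivalent to its connectedness, and Steps 1 and 2 together give exactly the stated equivalence.

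The main obstacle is the leaf bookkeeping in Step 2, and I would handle it by making the following choices precise. First, when passing to $W$, distinct facets may have the same trace on $W$, or a trace may fail to be maximal; so I pick, for each facet of $\Delta_W$, a single facet of $\Delta$ realizing it. Second, the joint's trace $G\cap W$ must be an actual facet of the subcollection; if it is not maximal, I replace it by a facet containing it, which only enlarges the right-hand side of the inclusion. Third, in the Mayer--Vietoris step, if $F\cap G=\emptyset$ then the leaf condition gives $F\cap H=\emptyset$ for all other facets $H$, so $F$ is disconnected from the rest; connectedness rules this out unless $F$ is the only facet.
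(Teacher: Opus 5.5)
Your proposal is correct and is essentially the argument behind the result the paper cites (Faridi's Theorem~3.2): the Bayer--Peeva--Sturmfels acyclicity criterion reduced to the lcm lattice, combined with the fact that a connected induced subcomplex of a simplicial tree is again a tree and hence acyclic. Two small remarks: the connectedness of $\Delta'$ after deleting the leaf $F$ needs a line of justification (every vertex of $F$ that lies in another facet lies in $F\cap G$; alternatively use reduced Mayer--Vietoris in degree $0$), and your ``replace the joint'' contingency never arises, because the joint's trace $G\cap W$ is by construction one of the chosen facets of the subcollection.
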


Graph-theoretic trees are precisely one-dimensional simplicial trees. With our notation in place, \cite[Theorem~27]{FH} may be restated as follows.

\begin{Proposition}\label{sara}
Let $I$ be a monomial ideal with $G(I)$. Then $\mathrm{pd}(I)=1$ if and only if there exist a tree $T$ and a bijection $f:V(T)\to G(I)$ such that $\mathcal{C}_\bullet(T,f)$ is a free resolution of $S/I$.
\end{Proposition}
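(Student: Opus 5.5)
The forward direction is formal, so most of the work goes into the converse, which I would prove by building the tree $T$ greedily along the lcm lattice and then using $\mathrm{pd}(I)=1$ to show that the graph obtained has no cycles.

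\emph{Forward direction.} If $\mathcal{C}_\bullet(T,f)$ resolves $S/I$ for a tree $T$, then this resolution has length $1$ as a resolution of $I$, so $\mathrm{pd}(I)\le 1$. Here we assume $|G(I)|\ge 2$ (the paper's convention $\mathrm{pd}(I)=1$ excludes principal ideals). Then $I$ is not principal, hence not free, and $\mathrm{pd}(I)=1$.

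\emph{Converse: construction.} Assume $\mathrm{pd}(I)=1$ and write $G(I)=\{u_1,\dots,u_r\}$. For $m\in L_I$ put $V_m=\{i\mid u_i \text{ divides } m\}$. Fix a linear extension of the divisibility order on $L_I$ and process its elements $m$ in increasing order, maintaining a graph $G$ on $[r]$ that starts with no edges. At step $m$, look at the induced subgraph $G[V_m]$, which has some number $c_m$ of connected components. Add $c_m-1$ edges joining vertices in distinct components so that $G[V_m]$ becomes connected.

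Each new edge $\{i,j\}$ satisfies $\mathrm{lcm}(u_i,u_j)=m$. Indeed, $m':=\mathrm{lcm}(u_i,u_j)$ divides $m$. If $m'\ne m$, then $m'$ was processed earlier, so $G[V_{m'}]$ was already connected and contains both $i$ and $j$. That contradicts $i,j$ lying in different components of $G[V_m]$.

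After all steps, every $G[V_m]$ is connected, and in particular $G$ itself is connected, since $V_{\mathrm{lcm}(u_1,\dots,u_r)}=[r]$.

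\emph{Exactness in low degrees.} Now consider the complex $C_1\to C_0\to S$ attached to the labeled graph $(G,f)$, with edge labels as above. I claim $\ker\partial_0=\operatorname{im}\partial_1$. It suffices to check this in each multidegree $\alpha$. In degree $\alpha$, the degree-$\alpha$ part of $C_0$ is spanned by the vertices $i$ with $u_i\mid x^\alpha$, which is a set of the form $V_m$. An element of $\ker\partial_0$ in that degree is a zero-sum combination of these vertices. Because $G[V_m]$ is connected, such a combination is a combination of differences of adjacent vertices, and the edge between adjacent vertices $i,j$ has label $\mathrm{lcm}(u_i,u_j)\mid x^\alpha$. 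Hence $C_1\to \Syz_1(I)$ is surjective.

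\emph{Main obstacle: no cycles.} The hard step is showing that $C_1\cong\Syz_1(I)$, and this is where $\mathrm{pd}(I)=1$ is used. Since $\mathrm{pd}(I)=1$, the syzygy module $\Syz_1(I)$ is free. The additivity of rank in $0\to \Syz_1(I)\to S^r\to I\to 0$ gives its rank as $r-1$.

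I would next show that the surjection $C_1\to\Syz_1(I)$ is minimal, i.e. that for each $m$ the number $c_m-1$ of edges labeled $m$ equals $\beta_{1,m}(I)$. The $\ge$ inequality is automatic from the surjection. For the reverse, I would argue via the lcm-lattice formula of Gasharov--Peeva--Welker: $\beta_{1,m}(I)$ is the reduced $\tilde H_0$ of the part of the lattice strictly below $m$. That part has exactly $c_m$ components, because components of $G[V_m]$ before step $m$ are generated by the connections made at the steps $m'<m$.

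Granting this, $C_1\to \Syz_1(I)$ is a graded surjection of free modules with the same number of generators in each degree. By Nakayama it is an isomorphism. So $C_1$ has rank $r-1$, i.e. $G$ has exactly $r-1$ edges. A connected graph on $r$ vertices with $r-1$ edges is a tree $T$.

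\emph{Conclusion.} Finally, $\mathcal{C}_\bullet(T,f)$ is exact everywhere: exactness at $C_0$ and $S$ was shown above, and $\partial_1$ is injective by the isomorphism just obtained. Alternatively, Proposition~\ref{basic} applies directly, since every induced subgraph on $V_m$ is connected. If the Betti-number identification turns out to be delicate, my fallback is to prove acyclicity of $G$ directly. I would take a cycle, look at its edge of largest label $m$ in the chosen order, and show that its endpoints were already connected in $G[V_m]$ before step $m$, contradicting the construction.
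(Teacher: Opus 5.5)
Your proof is correct, and it necessarily takes a different route from the paper, which gives no argument here: the statement is restated from Faridi--Hersey [FH, Theorem~27] and used as a black box, feeding Proposition~\ref{basic} in Proposition~\ref{lu} and Corollary~\ref{easy}.

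\textbf{Comparison.} Your argument is self-contained except for the Gasharov--Peeva--Welker formula $\beta_{1,m}(I)=\dim_{\mathbb K}\tilde H_0\bigl((\hat 0,m)_{L_I}\bigr)$. It also isolates exactly where the hypothesis enters. The greedy construction along a linear extension of $L_I$ works for \emph{every} monomial ideal. It always yields a connected graph in which every $G[V_m]$ is connected, and whose edges give a minimal set of binomial first syzygies, so it has exactly $\beta_1(I)$ edges. The hypothesis $\mathrm{pd}(I)=1$ is used only to get $\beta_1(I)=r-1$, which forces the graph to be a tree. As a bonus, the tree comes with precisely the connectivity property that the paper later extracts via Proposition~\ref{basic}. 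What the citation buys the paper is brevity; what yours buys is independence from [FH].

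\textbf{Small remarks.} In the GPW step, ``the part of the lattice strictly below $m$'' must be the open interval $(\hat 0,m)$, excluding $1=\operatorname{lcm}(\emptyset)$; otherwise the poset is a cone and has one component. The components of $(\hat 0,m)$ are the classes of $V_m$ under the relation generated by $\operatorname{lcm}(u_i,u_j)\neq m$. These agree with the components of $G[V_m]$ before step $m$, because every earlier edge inside $V_m$ has label a proper divisor of $m$, and each such $G[V_{m'}]$ was already connected.

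You should drop the fallback, since it would not work as sketched. The cycle edge with the latest label $m$ does not force the rest of the cycle into $V_m$, because earlier labels in a linear extension need not divide $m$. Moreover, for $I=(x,y,z)$ the greedy graph is a triangle, so any acyclicity argument must use $\mathrm{pd}(I)=1$ in a way your sketch does not.

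Your restriction to $|G(I)|\ge 2$ is right: for a principal ideal the one-vertex tree resolves $S/I$ while $\mathrm{pd}(I)=0$.
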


As pointed out in \cite{FH}, if $\mathcal{C}_\bullet(T,f)$ is a free resolution of $S/I$ then it   is a minimal free resolution of $S/I$.
Given a tree $T$, a \textit{leaf vertex} (or simply a \textit{leaf}) of $T$ is a vertex of degree one in $T$. For all trees $T$, there exists an ordering $v_1,\ldots, v_r$ of vertices of $T$ such that for all $1\leq k\leq r$, the induced subgraph of $T$ on $\{v_1,\ldots, v_k\}$ is connected and $v_k$ is a leaf of this induced subgraph. Such an ordering is called a \textit{leaf ordering} of $T$.

\begin{Proposition} \label{lu}
Let \(I\subseteq S\) be a monomial ideal of projective dimension one with \(|G(I)|=r\geq 2\), and let \(T\) be a tree with a bijective map \(f:V(T)\rightarrow G(I)\) such that \(\mathcal{C}_\bullet(T,f)\) is a free resolution of \(S/I\). Assume further that \(V(T)=[r]\) and that \(1,2,\ldots,r\) is a leaf ordering of \(T\). Let \(T_k\) denote the induced subgraph of \(T\) on the vertex set \([1,k]\), set \(I_k=(f(1),\ldots,f(k))\), and let \(f_k\) be the restriction of \(f\) to \([k]\). Then for each \(1\leq k\leq r\), \(\mathcal{C}_\bullet(T_k,f_k)\) is a free resolution of \(S/I_k\).
\end{Proposition}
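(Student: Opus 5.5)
The plan is to verify the connectivity criterion of Proposition~\ref{basic} for the labeled tree $(T_k,f_k)$, deducing it from the same criterion for $(T,f)$, which holds by hypothesis.

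\emph{Setup.} By the definition of a leaf ordering, $T_k$ is a connected induced subgraph of the tree $T$. Hence $T_k$ is itself a tree, that is, a one-dimensional simplicial tree; for $k=1$ it is a single vertex. So Proposition~\ref{basic} applies to $(T_k,f_k)$ and $I_k=(f(1),\dots,f(k))$. Since $f$ is a bijection onto $G(I)$, no $f(i)$ divides another $f(j)$. Therefore $G(I_k)=\{f(1),\dots,f(k)\}$, and every element of $L_{I_k}$ is an lcm of a subset of $G(I)$, so $L_{I_k}\subseteq L_I$.

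\emph{Transfer of connectivity.} Since $\mathcal{C}_\bullet(T,f)$ resolves $S/I$, the ``only if'' part of Proposition~\ref{basic} gives the following. For every $u\in L_I$, the induced subgraph of $T$ on
\[A_u=\{i\in[r]\mid f(i)\text{ divides }u\}\]
is connected. Now fix $u\in L_{I_k}$ and two vertices $a,b\in A_u\cap[k]$. We must show that $a$ and $b$ are joined by a path inside the induced subgraph of $T_k$ on $A_u\cap[k]$.

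In the tree $T$ there is a unique path $P$ from $a$ to $b$. Because the induced subgraph on $A_u$ is connected, it contains some $a$--$b$ path. That path is also a path in $T$, so by uniqueness it equals $P$; hence $V(P)\subseteq A_u$. Likewise, $T_k$ is connected and contains $a,b$, so it contains an $a$--$b$ path, which again must be $P$; hence $V(P)\subseteq[k]$. Thus $P$ lies in the induced subgraph on $A_u\cap[k]$, so this subgraph is connected. By the ``if'' part of Proposition~\ref{basic}, $\mathcal{C}_\bullet(T_k,f_k)$ resolves $S/I_k$.

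The only point needing care, and the main potential obstacle, is this uniqueness-of-paths argument in trees. It is what allows connectivity of the two sets $A_u$ and $[k]$ separately to combine into connectivity of their intersection. Everything else is bookkeeping: checking $G(I_k)$ and $L_{I_k}\subseteq L_I$ so that Proposition~\ref{basic} applies in both directions.
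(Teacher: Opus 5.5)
Your proof is correct and follows the paper's argument. Both reduce, via Proposition~\ref{basic}, to showing that the induced subgraph on $A_u\cap[k]$ is connected, and both use uniqueness of paths in the tree $T$ to see that the path joining two such vertices lies in both $T_k$ and $A_u$. Your extra bookkeeping ($T_k$ is a tree, $G(I_k)=\{f(1),\dots,f(k)\}$, and $L_{I_k}\subseteq L_I$) only makes explicit what the paper leaves implicit.
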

\begin{proof} Fix $1\leq k\leq r$. Take arbitrarily $1\leq j_1<j_2<\cdots<j_s\leq k$. Let $u=\mathrm{lcm}(f(j_1),\ldots, f(j_s))$ and let $$A_u=\{1\leq i\leq r \mid f(i) \mbox{ divides } u\}.$$ By Proposition~\ref{basic}, it suffices to show that the induced subgraph of $T_k$ on $A_u$, i.e., the induced subgraph of $T$ on $A_u\cap [1,k]$ is connected.

 To show this, fix $s,t\in A_u\cap [1,k]$ with $s\leq t$. Since $T_k$ is connected, there is a unique shortest path connecting $s,t$: $$s-s_1-\cdots-s_i-t.$$
 Since the induced subgraph of $T$ on $A_u$ is connected by Proposition~\ref{basic}, we conclude that $\{s_1,\dots,s_i\}\subseteq A_u$. Consequently, the induced subgraph of $T_k$ on $A_u$ is connected, as desired.
\end{proof}

\begin{Corollary} \label{easy} Let $I$ be a monomial ideal of projective dimension 1. Then there exists an ordering $u_1,\ldots, u_r$ of elements in $G(I)$ such that for all $2\leq k\leq r$, the following statements hold:
\begin{enumerate}
       \item  $I_k=(u_1,\ldots,u_k)$ has projective dimension 1.
       \item  There exists some $1\leq j_k<k$ such that $u_{j_k}$ divides $\operatorname{lcm}(u_i,u_k)$ for all $i<k$.
     \end{enumerate}
\end{Corollary}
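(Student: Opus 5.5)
By Proposition~\ref{sara}, there is a tree $T$ and a bijection $f\colon V(T)\to G(I)$ such that $\mathcal{C}_\bullet(T,f)$ is a free resolution of $S/I$. We relabel the vertices so that $V(T)=[r]$ and $1,2,\dots,r$ is a leaf ordering of $T$; such an ordering exists for every tree. Then we set $u_k=f(k)$, so $I_k=(f(1),\dots,f(k))$. Note that $r\ge 2$: a principal ideal has projective dimension $0$, so $\mathrm{pd}(I)=1$ forces at least two generators.

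For (1), fix $2\le k\le r$. By Proposition~\ref{lu}, $\mathcal{C}_\bullet(T_k,f_k)$ is a free resolution of $S/I_k$, and by the remark following Proposition~\ref{sara} it is minimal. The elements $u_1,\dots,u_k$ are pairwise non-divisible, being part of $G(I)$, so $G(I_k)=\{u_1,\dots,u_k\}$ and $f_k$ is a bijection onto $G(I_k)$. Since $T_k$ is a tree on $k\ge 2$ vertices, it has an edge, so $C_1\neq 0$ and $\mathrm{pd}(S/I_k)=2$. Alternatively, since $k\ge 2$ the ideal $I_k$ is not principal, so $\mathrm{pd}(I_k)\ge 1$, and the resolution has length $1$ as a resolution of $I_k$. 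Either way, $\mathrm{pd}(I_k)=1$.

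For (2), fix $k\ge 2$. Since $k$ is a leaf of $T_k$, it has a unique neighbor $j_k<k$ in $T_k$. Take any $i<k$ and let $u=\mathrm{lcm}(u_i,u_k)\in L_{I_k}$. By Proposition~\ref{basic} applied to the resolution $\mathcal{C}_\bullet(T_k,f_k)$ of $S/I_k$, the induced subgraph of $T_k$ on $A=\{l\le k\mid u_l \text{ divides } u\}$ is connected. Both $i$ and $k$ lie in $A$, and $i\ne k$. Any path in $T_k$ from $k$ to $i$ must pass through the unique neighbor $j_k$ of $k$. Hence $j_k\in A$ (trivially so when $i=j_k$), that is, $u_{j_k}$ divides $\mathrm{lcm}(u_i,u_k)$.

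The only delicate step is verifying that Proposition~\ref{basic} applies to $T_k$ with the label set $G(I_k)$. This holds because a subset of a minimal generating set is still minimal, so the lattice $L_{I_k}$ is formed from exactly the labels $f(1),\dots,f(k)$. With that checked, the remaining steps follow directly.
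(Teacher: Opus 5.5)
Your proposal is correct and takes the same approach as the paper. You take a leaf ordering of the tree from Proposition~\ref{sara}, use Proposition~\ref{lu} for (1), and for (2) apply Proposition~\ref{basic} to $\operatorname{lcm}(u_i,u_k)$, using that $k$ is a leaf of $T_k$. Your extra checks, that $G(I_k)=\{u_1,\dots,u_k\}$ and that the length-one resolution forces $\operatorname{pd}(I_k)=1$ exactly, only make explicit what the paper leaves implicit.
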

\begin{proof}By Proposition~\ref{sara}, we may take \(T\) to be a tree equipped with a bijective map \(f:V(T)\rightarrow G(I)\) such that \(\mathcal{C}_\bullet(T,f)\) is a free resolution of \(S/I\). We further assume that \(V(T)=[r]\) and that \(1,2,\ldots,r\) is a leaf ordering of \(T\). Fix $2\leq k\leq r$, let \(T_k\) denote the induced subgraph of \(T\) on the vertex set \([1,k]\).

Denote $u_i=f(i)$ for $1\leq i\leq r$. Then statement (1) follows directly from Proposition~\ref{lu}.
For the proof of (2), we first notice that for any $1\leq i<k$, the induced subgraph of $T_k$ on $B_i:=\{1\leq j\leq k \mid u_j \mbox{ divides } \operatorname{lcm}(u_i,u_k)\}$ is connected by Proposition~\ref{basic}. Since $k$ is a leaf of $T_k$, the unique vertex adjacent to $k$ should belong to $B_i$. Denote this vertex by $j_k$. Then $u_{j_k}$ divides $\operatorname{lcm}(u_i,u_k)$ for all $1\leq i<k$, as required.
\end{proof}

Note that for a monomial ideal $I$, $I$ is projective (i.e., $\mathrm{pd}(I)=0$) if and only if $|G(I)|=1$.

\begin{Theorem} \label{main} Let $I\subseteq S$ be a monomial ideal with $|G(I)|\geq 2$. Then the following statements are equivalent:
\begin{enumerate}
          \item $I$ has projective dimension 1,
          \item there exists an ordering $u_1,\ldots,u_r$ of elements in $G(I)$ satisfying that for each $2\leq k\leq r$, there exists some $1\leq j_k<k$ such that $u_{j_k}$ divides $\operatorname{lcm}(u_i,u_k)$ for all $1\leq i< k$.
          \item there exists an ordering $u_1,\ldots,u_r$ of elements in $G(I)$ satisfying that for each $2\leq k\leq r$, $(u_1,\ldots,u_{k-1}):u_k$ is generated by a single monomial.
        \end{enumerate}
\end{Theorem}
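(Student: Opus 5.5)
The plan is to prove (1)$\Rightarrow$(2), (2)$\Leftrightarrow$(3), and (3)$\Rightarrow$(1).

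\emph{(1)$\Rightarrow$(2).} This is exactly Corollary~\ref{easy}(2), so nothing new is needed.

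\emph{(2)$\Leftrightarrow$(3).} We use the standard fact that for monomials the colon ideal $(u_1,\ldots,u_{k-1}):u_k$ is generated by the monomials $w_i=u_i/\gcd(u_i,u_k)=\operatorname{lcm}(u_i,u_k)/u_k$, $i<k$.
\begin{itemize}
\item \emph{(2)$\Rightarrow$(3).} If $u_{j_k}$ divides $\operatorname{lcm}(u_i,u_k)$ for all $i<k$, then $\operatorname{lcm}(u_{j_k},u_k)$ divides $\operatorname{lcm}(u_i,u_k)$. Hence $w_{j_k}$ divides every $w_i$, and the colon ideal is principal, generated by $w_{j_k}$.
\item \emph{(3)$\Rightarrow$(2).} If the colon ideal is principal, its generator is a minimal monomial generator. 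Since the $w_i$ generate it, the generator equals some $w_{j}$, and this $w_j$ divides every $w_i$. Thus $\operatorname{lcm}(u_j,u_k)$ divides $\operatorname{lcm}(u_i,u_k)$, which gives $u_j\mid\operatorname{lcm}(u_i,u_k)$. Set $j_k=j$.
\end{itemize}

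\emph{(3)$\Rightarrow$(1).} Argue by induction on $k$ that $I_k=(u_1,\ldots,u_k)$ has $\operatorname{pd}\le 1$. Use the exact sequence
\[
0\to S/(I_{k-1}:u_k)(-\deg u_k)\xrightarrow{\cdot u_k} S/I_{k-1}\to S/I_k\to 0.
\]
Since $I_{k-1}:u_k$ is principal, the left term has projective dimension $\le 1$. By induction $\operatorname{pd}(S/I_{k-1})\le 2$. The mapping cone then gives $\operatorname{pd}(S/I_k)\le 2$, i.e.\ $\operatorname{pd}(I_k)\le 1$. More concretely, the mapping cone yields a free resolution of $S/I_k$ of length $2$ whose second module has rank $k-1$ (one new relation per step) and whose third module vanishes: the cone's length-3 term would be $F_2$ of the principal quotient, which is $0$. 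So the inductive claim is precisely that the iterated mapping cone is $0\to S^{k-1}\to S^k\to S$, with the relation at step $k$ being $w_{j_k}e_k-(\text{monomial})e_{j_k}$ lifted into the previous complex. Finally, $\operatorname{pd}(I)\ne 0$ since $|G(I)|\ge 2$. Hence $\operatorname{pd}(I)=1$.

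The main obstacle is ensuring that the mapping cone does not produce a length-3 term, which amounts to verifying that the comparison map from the resolution of the principal colon lands correctly. This is automatic because a principal ideal's quotient has resolution $0\to S\to S$. So I expect this step to be routine. An alternative for (3)$\Rightarrow$(1) is to build the tree $T$ with edges $\{j_k,k\}$ and check connectivity via Proposition~\ref{basic}, but the mapping cone route avoids that.
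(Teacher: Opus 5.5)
Your proposal is correct and follows the paper's proof essentially step for step: (1)$\Rightarrow$(2) comes from Corollary~\ref{easy}, (2)$\Rightarrow$(3) comes from $\operatorname{lcm}(u_{j_k},u_k)\mid\operatorname{lcm}(u_i,u_k)$, and (3)$\Rightarrow$(1) is an induction on the short exact sequence with the mapping-cone bound $\operatorname{pd}(S/I_k)\le 2$, finished by noting $|G(I)|\ge 2$. Your extra (3)$\Rightarrow$(2) argument and your rank count for the cone are correct but not needed to close the cycle of implications.
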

\begin{proof}
$(1)\Rightarrow (2)$ It follows from Proposition~\ref{easy}.

$(2)\Rightarrow (3)$
Fix each $k\geq 2$. Since $\operatorname{lcm}(u_{j_k},u_k)$ divides $\operatorname{lcm}(u_{i},u_k)$ for all $1\leq i<k$, it follows that
\[
(u_1,\dots,u_{k-1}):u_k
=
\left(\frac{\operatorname{lcm}(u_1,u_k)}{u_k},\dots,\frac{\operatorname{lcm}(u_{k-1},u_k)}{u_k}\right)
=
\left(\frac{\operatorname{lcm}(u_{j_k},u_k)}{u_k}\right).
\]

$(3)\Rightarrow (1)$
We proceed by induction on $r$.
Set \(I_k=(u_1,\dots,u_k)\) for $1\leq k\leq r$. Since $I_{r-1}:u_r$ is generated by a single element, \(S/(I_{r-1}:u_r)\) has projective dimension at most \(1\). Thus,
\(\beta_{i}(S/(I_{r-1}:u_r))=0\) for all \(i\geq 2\).

On the other hand, we have the following short exact sequence:
\[
0\longrightarrow \frac{S}{I_{r-1}:u_r}[-\deg(u_r)]
\longrightarrow \frac{S}{I_{r-1}}
\longrightarrow \frac{S}{I_r}
\longrightarrow 0. \tag{1}
\]
By the mapping cone construction, it follows that
\[
\beta_{i}(S/I_r)\leq \beta_{i}(S/I_{r-1})+\beta_{i-1}(S/(I_{r-1}:u_r)).
\]
By the induction hypothesis, \(\beta_{i}(S/I_{r-1})=0\) for all \(i\geq 3\). This implies \(\beta_{i}(S/I_r)=0\) for all \(i\geq 3\). Consequently, \(S/I_r\) has projective dimension at most \(2\), which further yields that the projective dimension of \(I=I_r\) equals \(1\).
  \end{proof}

For a monomial ideal $I$ of projective dimension 1, its total Betti numbers are determined by the number of its minimal generators via the Hilbert-Burch Lemma. We now present a formula for the multigraded Betti numbers, and hence the graded Betti numbers of such ideals. For this purpose, we require the following definition from \cite{FTV}.
\begin{Definition}
\em Let \( I, J, K \subset S\) be monomial ideals such that \( G(I) \) is the disjoint union of \( G(J) \) and \( G(K) \). Then \( I = J + K \) is a \emph{Betti splitting} of \( I \) if
\[
\beta_{i,j}(I) = \beta_{i,j}(J) + \beta_{i,j}(K) + \beta_{i-1,j}(J \cap K) \quad \text{for all } i \in \mathbb{N} \text{ and (multi)degree } j,
\]
where \( \beta_{-1,j}(M) = 0 \) for any graded module \( M \) and all \( j \) by convention.
\end{Definition}

By \cite[Proposition~2.1]{FTV}, for monomial ideals \(I, J, K\) with \(G(I)=G(J)\sqcup G(K)\) (where \(\sqcup\) denotes the disjoint union), the decomposition \(I=J+K\) is a Betti splitting if and only if the following condition holds: for all \(i\ge 0\) and all graded degrees \(j\), the natural map
\[
\phi_{i,j}\colon \operatorname{Tor}_i^S(J\cap K,\mathbb{K})_j\to \operatorname{Tor}_i^S(J,\mathbb{K})_j\oplus \operatorname{Tor}_i^S(K,\mathbb{K})_j
\]
induced by the short exact sequence \(0\to J\cap K\to J\oplus K\to I\to 0\) is the zero map. Note that \(\phi_{0,j}\) is automatically the zero map because \(G(I)=G(J)\sqcup G(K)\).

\begin{Corollary}\label{2.8}
Let $I$ be a monomial ideal with projective dimension 1 and let $u_1, \ldots, u_r$ be an admissible order of its minimal generators. Suppose that $(u_1, \ldots, u_{k-1}) : u_k$ is generated by a monomial $w_k$ for $k = 2, \ldots, r$. Then for each $\alpha \in \mathbb{N}^n$,
$$\beta_{1,\alpha}(I) = \left| \left\{ 2 \leq k \leq r \mid \mathrm{mdeg}(w_k) + \mathrm{mdeg}(u_k) = \alpha \right\} \right|,$$
and for any $j \in \mathbb{N}$,
$$\beta_{1,j}(I) = \left| \left\{ 2 \leq k \leq r \mid \deg(w_k) + \deg(u_k) = j \right\} \right|.$$
\end{Corollary}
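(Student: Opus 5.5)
We argue by induction on $k$, applying the Betti splitting criterion of \cite[Proposition~2.1]{FTV} to $I_k=I_{k-1}+(u_k)$ with $J=I_{k-1}=(u_1,\dots,u_{k-1})$ and $K=(u_k)$. By Theorem~\ref{main}, each $I_k$ with $k\ge 2$ has projective dimension at most $1$, and the colon ideals $I_{k-1}:u_k=(w_k)$ are principal. Since $I_{k-1}\cap(u_k)=u_k(I_{k-1}:u_k)=(u_kw_k)$ is principal, it is free of rank one with generator in multidegree $\mathrm{mdeg}(u_k)+\mathrm{mdeg}(w_k)$. Hence $\operatorname{Tor}_i^S(J\cap K,\mathbb{K})=0$ for $i\ge1$, and $\phi_{0,\alpha}$ is zero because $G(I_k)=G(I_{k-1})\sqcup\{u_k\}$, as noted before the statement. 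Thus all the maps $\phi_{i,\alpha}$ vanish, and $I_k=I_{k-1}+(u_k)$ is a Betti splitting in the multigraded sense. The multigraded version of \cite[Proposition~2.1]{FTV} holds by the same argument, since the sequence $0\to J\cap K\to J\oplus K\to I\to0$ is $\mathbb{Z}^n$-graded.

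Applying the splitting formula with $i=1$ gives
\[
\beta_{1,\alpha}(I_k)=\beta_{1,\alpha}(I_{k-1})+\beta_{1,\alpha}((u_k))+\beta_{0,\alpha}((u_kw_k)).
\]
Here $\beta_{1,\alpha}((u_k))=0$, and $\beta_{0,\alpha}((u_kw_k))$ equals $1$ exactly when $\alpha=\mathrm{mdeg}(w_k)+\mathrm{mdeg}(u_k)$ and $0$ otherwise. The base case is $I_1=(u_1)$, for which $\beta_{1,\alpha}=0$. Summing over $k=2,\dots,r$ then yields the multigraded formula. The graded formula follows by summing $\beta_{1,\alpha}$ over all $\alpha$ with $|\alpha|=j$.

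Alternatively, one could avoid splittings altogether by using the exact sequence (1) from the proof of Theorem~\ref{main} in its multigraded form together with the long exact Tor sequence. The only potential obstacle in that approach is showing that the connecting maps vanish, i.e.\ that no cancellation occurs in the mapping cone. This reduces to the inequality $\sum_\alpha\beta_{1,\alpha}(I)\le r-1$, which is forced by the mapping cone. Equality holds because the Hilbert--Burch count $\beta_1(I)=\beta_0(I)-1=r-1$ follows from the vanishing of the Euler characteristic (rank) of the resolution of $I$, which has rank one. Since equality holds in total, no cancellation can occur in any multidegree. The Betti splitting route above sidesteps this entirely, so I would present that one, and I expect no real obstacle beyond checking that the \cite{FTV} criterion applies multigradedly.
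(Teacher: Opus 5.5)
Your proposal is correct and follows the paper's proof: you show that $I_k=I_{k-1}+(u_k)$ is a Betti splitting via \cite[Proposition~2.1]{FTV}, because $I_{k-1}\cap(u_k)=(u_kw_k)$ is free, and then telescope $\beta_{1,\alpha}(I_k)=\beta_{1,\alpha}(I_{k-1})+\beta_{0,\alpha}((u_kw_k))$ starting from $\beta_{1,\alpha}(I_1)=0$. Your explicit remarks that $\beta_{1,\alpha}((u_k))=0$ and that the \cite{FTV} criterion holds multigradedly fill in details the paper leaves implicit, and your alternative Euler-characteristic argument is also sound, though not needed.
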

\begin{proof} Set $I_{k-1}=(u_1,\ldots,u_{k-1})$ for $k=2,\ldots,r$. We claim that $I_k = I_{k-1} + (u_k)$ is a Betti splitting for all $2 \leq k \leq r$. This is because $I_{k-1} \cap (u_k) = (I_{k-1} : u_k)(u_k)=(u_kw_k)$, thus a free module. This implies that the map
\[
\phi_i: \operatorname{Tor}_i(I_{k-1} \cap (u_k), \mathbb{K}) \to \operatorname{Tor}_i(I_{k-1}, \mathbb{K}) \oplus \operatorname{Tor}_i((u_k), \mathbb{K})
\]
vanishes for all $i \geq 1$, which by \cite[Proposition~2.1]{FTV} confirms the claim.

By the definition of Betti splitting, we have for all $2\leq k\leq r$,
\[
\beta_{1,\alpha}(I_k) = \beta_{1,\alpha}(I_{k-1}) + \beta_{0,\alpha}((u_k w_k)).
\]
Note that $\beta_{1,\alpha}(I_1)=0$. Summing over $k=2$ to $r$, we get
\[
\beta_{1,\alpha}(I) = \sum_{k=2}^r \beta_{0,\alpha}((u_k w_k)).
\]
Since $\beta_{0,\alpha}((u_k w_k)) = 1$ if $\alpha = \mathrm{mdeg}(u_k w_k) = \mathrm{mdeg}(w_k) + \mathrm{mdeg}(u_k)$, and $0$ otherwise, the first equality follows.
The second equality follows from the first one.
\end{proof}

This result is useful in the next section.

\section{linear properties}

In this section, we establish the equivalence of linear relations and linear quotients for monomial ideals of projective dimension one that are generated in a single degree. We then show that all powers of squarefree monomial ideals of projective dimension one with linear resolutions have linear quotients, and finally study the classification of such squarefree ideals.

\begin{Proposition} \label{linear}
Let \(I\) be a monomial ideal of projective dimension one. If \(I\) is generated in a single degree \(d\), then the following statements are equivalent:
\begin{enumerate}
  \item \(I\) has linear relations;
  \item \(I\) has a linear resolution;
  \item \(I\) has linear quotients.
\end{enumerate}
\end{Proposition}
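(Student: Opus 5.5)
We will prove the cycle $(3)\Rightarrow(2)\Rightarrow(1)\Rightarrow(3)$. The implication $(3)\Rightarrow(2)$ is the result of \cite{HHBook} recalled in the introduction: an ideal generated in a single degree with linear quotients has a linear resolution. The implication $(2)\Rightarrow(1)$ is immediate from the definitions. A $d$-linear resolution forces $\beta_{1,j}(I)=0$ for $j\neq d+1$. Since $d(I)=d$, this gives $\beta_{1,j}(I)=0$ for all $j>d(I)+1$, i.e. $I$ is linearly related.

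The substance is $(1)\Rightarrow(3)$. Because $\mathrm{pd}(I)=1$, Theorem~\ref{main} provides an ordering $u_1,\ldots,u_r$ of $G(I)$ (the admissible order) such that for each $2\le k\le r$ the colon ideal $(u_1,\ldots,u_{k-1}):u_k$ is principal, generated by a monomial $w_k$. We claim this same ordering exhibits linear quotients, which amounts to showing that $\deg(w_k)=1$ for every $k$.

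First, $w_k\neq 1$. By the proof of $(2)\Rightarrow(3)$ in Theorem~\ref{main}, $w_k=\operatorname{lcm}(u_{j_k},u_k)/u_k$ with $j_k<k$. Since $u_{j_k}$ and $u_k$ are distinct minimal generators, $u_{j_k}\nmid u_k$, so $\deg(w_k)\ge 1$. Next, Corollary~\ref{2.8} gives
\[
\beta_{1,j}(I)=\left|\left\{2\le k\le r \mid \deg(w_k)+d=j\right\}\right|,
\]
using $\deg(u_k)=d$ for all $k$. If some $\deg(w_k)\ge 2$, then $\beta_{1,d+\deg(w_k)}(I)\ge 1$ with $d+\deg(w_k)>d+1=d(I)+1$, contradicting the linear relations hypothesis. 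Hence $\deg(w_k)=1$ for all $k$. Each colon ideal is therefore generated by a single variable, and $I$ has linear quotients with respect to this order.

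No step is a real obstacle once Corollary~\ref{2.8} is available. The only care needed is to confirm that $w_k$ is a nonconstant monomial, and to note that the single-degree hypothesis is what converts the degree count in Corollary~\ref{2.8} into the statement $\deg(w_k)=1$.
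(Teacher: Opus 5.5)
Your proposal is correct and follows essentially the same route as the paper: you take the admissible order from Theorem~\ref{main}, read off $\beta_{1,j}(I)$ from Corollary~\ref{2.8}, and use the single-degree hypothesis to force $\deg(w_k)=1$. You also make explicit the check $w_k\neq 1$ (needed because linear relations only rule out $j>d+1$), which the paper's one-line argument leaves implicit.
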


\begin{proof}
The implications \((3)\Rightarrow (2)\Rightarrow (1)\) hold in general for graded ideals generated in a single degree.

For the proof of \((1)\Rightarrow (3)\), we adopt the notation from Corollary~\ref{2.8}. If \(I\) has linear relations, then \(\beta_{1,j}(I) = 0\) for every \(j \neq d+1\), which implies that \(\deg(w_i) = 1\) for all \(i = 2, \ldots, r\), and so \(I\) has linear quotients.
\end{proof}

In \cite{CFK1}, the minimal multigraded free resolutions of powers of a squarefree monomial ideal of projective dimension 1 were described explicitly. From this description, it is not difficult to conclude that for a squarefree monomial ideal \(I\) of projective dimension 1, if \(I\) has a linear resolution, then all its powers \(I^s\) have linear resolutions. Combining this result with Proposition~\ref{linear}, we wonder if for such an ideal, all its powers have linear quotients. This is indeed the case as shown in the next results.

Let \(\mathbb{Z}_{\geq 0}^r(s)\) denote \(\{(a_1,\ldots,a_r)\in \mathbb{Z}_{\geq 0}^r\mid a_1+\cdots+a_r=s\}\). For a monomial \(u\) and a variable \(x\), we denote by \(\deg_x(u)\) the degree of \(x\) in \(u\), which is the exponent of \(x\) in the monomial \(u\).
\begin{Lemma}\label{unique}
Let \(I\) be a squarefree monomial ideal of projective dimension 1 with \(G(I) = \{u_1, \ldots, u_r\}\). Then for any \((a_1, \ldots, a_r), (b_1, \ldots, b_r) \in \mathbb{Z}_{\geq 0}^r(s)\), if \(u_1^{a_1}\cdots u_r^{a_r} = u_1^{b_1}\cdots u_r^{b_r}\), then \(a_i = b_i\) for all \(i = 1, \ldots, r\).
\end{Lemma}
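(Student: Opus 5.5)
We argue by induction on $r=|G(I)|$, using the ordering from Corollary~\ref{easy}. To make the induction run, we prove the slightly more general statement for squarefree monomial ideals of projective dimension at most one. The base case $r=1$ is trivial: $a_1=b_1=s$.

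For $r\ge 2$, relabel $G(I)$ according to Corollary~\ref{easy}. Then $I_{r-1}=(u_1,\dots,u_{r-1})$ is a squarefree monomial ideal of projective dimension at most one. (It has projective dimension one by part (1) of Corollary~\ref{easy} when $r-1\ge 2$, and is principal when $r-1=1$.) Moreover, there is some $j=j_r<r$ with $u_j \mid \operatorname{lcm}(u_i,u_r)$ for all $i<r$. Note that the equality $u_1^{a_1}\cdots u_r^{a_r}=u_1^{b_1}\cdots u_r^{b_r}$ is invariant under relabeling, so we may work in this ordering.

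The key step is to find a variable $y$ that separates $u_r$ from all other generators. Concretely, we want $y\nmid u_r$ but $y\mid u_i$ for every $i<r$.
\begin{enumerate}
\item Since $u_j$ and $u_r$ are distinct minimal generators, $u_j\nmid u_r$. As $u_j$ is squarefree, there is a variable $y$ with $y\mid u_j$ and $y\nmid u_r$.
\item For each $i<r$, we have $y\mid u_j\mid \operatorname{lcm}(u_i,u_r)$.
\item Since $y\nmid u_r$, this forces $y\mid u_i$.
\end{enumerate}
Hence $\deg_y(u_i)=1$ for $i<r$ (squarefreeness) and $\deg_y(u_r)=0$. Comparing $y$-degrees on both sides gives
\[
\sum_{i<r} a_i=\sum_{i<r} b_i, \quad\text{that is,}\quad s-a_r=s-b_r,
\]
so $a_r=b_r$.

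Cancelling $u_r^{a_r}$ in the domain $S$ yields $u_1^{a_1}\cdots u_{r-1}^{a_{r-1}}=u_1^{b_1}\cdots u_{r-1}^{b_{r-1}}$. Both exponent vectors lie in $\mathbb{Z}_{\ge 0}^{r-1}(s-a_r)$. Applying the induction hypothesis to $I_{r-1}$ gives $a_i=b_i$ for all $i<r$.

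The main point needing care is producing the separating variable $y$. Squarefreeness is used exactly here, to turn $y\mid \operatorname{lcm}(u_i,u_r)$ together with $y\nmid u_r$ into $\deg_y u_i=1$. It is also what makes the $y$-degree count the number of factors exactly.
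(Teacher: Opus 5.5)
Your proof is correct and is essentially the paper's argument. The paper takes the ordering from Theorem~\ref{main}(3) and uses any variable dividing the generator $w_r$ of $(u_1,\ldots,u_{r-1}):u_r$. Since $w_r=\operatorname{lcm}(u_{j_r},u_r)/u_r$, such a variable is exactly your $y$ dividing $u_{j_r}$ but not $u_r$; the paper then compares $y$-degrees and inducts as you do, and you additionally spell out why the induction hypothesis applies to $I_{r-1}$.
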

This is a copy of \cite[Proposition 4.1]{CFK1}. For completeness, we sketch a proof. By Theorem~\ref{main}, we may assume that \((u_1,\ldots,u_{r-1}):u_r\) is generated by a monomial \(w_r\). Then, for any variable \(x\) that divides  \(w_r\), \(x\) does not divide \(u_r\), but divides \(u_i\) (thus, \(\deg_x(u_i)=1\)) for all \(i=1,\ldots,r-1\). This implies \(a_1+\cdots+a_{r-1}=b_1+\cdots+b_{r-1}\), and so \(a_r=b_r\). By induction, it follows that \(a_i = b_i\) for all \(i = 1, \ldots, r-1\).
From this proof, the condition that all \(u_i\) are squarefree is essential. This uniqueness property fails in the non-squarefree case.
\begin{Example}\em
Let \(I\subset \mathbb{K}[x,y]\) with \(G(I)=\{x^2y^3, x^3y^2, x^4y\}\). Then \(I\) has projective dimension one, but \((x^2y^3)(x^4y)=(x^3y^2)^2\), which shows that the conclusion of Lemma \ref{unique} fails when \(I\) is not squarefree.
\end{Example}

\begin{Proposition}\label{power}
Let \(I\) be a squarefree monomial ideal of projective dimension one in the polynomial ring \(S = \mathbb{K}[x_1, \cdots, x_n]\) over a field \(\mathbb{K}\). If \(I\) has a \(d\)-linear resolution, then \(I^s\) has linear quotients for all \(s\ge 1\).
\end{Proposition}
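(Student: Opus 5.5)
The plan is to write $G(I^s)$ explicitly, order it lexicographically by exponent vectors, and show that every colon ideal is generated by variables coming from the linear-quotient order of $I$.

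\emph{Setup.} By Proposition~\ref{linear} and Theorem~\ref{main}, order $G(I)=\{u_1,\dots,u_r\}$ so that $(u_1,\dots,u_{k-1}):u_k=(y_k)$ for each $k\ge 2$, where $y_k$ is a variable. By the proof of $(2)\Rightarrow(3)$ in Theorem~\ref{main} there is $j_k<k$ with $\operatorname{lcm}(u_{j_k},u_k)=y_ku_k$. Since all $u_i$ have degree $d$, this gives $y_ku_k=z_ku_{j_k}$ for some variable $z_k$. As in the proof of Lemma~\ref{unique}, $y_k\nmid u_k$ and $y_k\mid u_i$ for all $i<k$. By Lemma~\ref{unique}, the minimal generators of $I^s$ are exactly the monomials $u^{a}=u_1^{a_1}\cdots u_r^{a_r}$ with $a\in\mathbb Z^r_{\ge0}(s)$, and they are pairwise distinct. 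Note that $I^s$ is generated in the single degree $sd$.

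\emph{Order and easy inclusion.} Order $G(I^s)$ by $u^b>u^a$ iff $b>_{\rm lex}a$. Fix $a$ and set $J_a=(u^b: b>_{\rm lex}a):u^a$. I claim that
\[
J_a=(y_k : k\ge 2,\ a_k>0).
\]
The inclusion $\supseteq$ is immediate. If $a_k>0$, then $y_ku^a=z_k\,u^{a-e_k+e_{j_k}}$, and $a-e_k+e_{j_k}>_{\rm lex}a$ because $j_k<k$.

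\emph{Reverse inclusion.} I must show that for every $b>_{\rm lex}a$ some $y_k$ with $a_k>0$ divides $u^b/\gcd(u^a,u^b)$. The first attempt is a degree count in $y_k$ using squarefreeness. Let $k$ be the largest index with $a_k\neq b_k$. If $a_k>b_k$, then $a_i=b_i$ for $i>k$, and $y_k\mid u_i$ for $i<k$, $y_k\nmid u_k$. Hence
\[
\deg_{y_k}u^b-\deg_{y_k}u^a=\sum_{i<k}(b_i-a_i)=a_k-b_k>0,
\]
so $y_k$ lies in the colon, and $a_k>0$ as required.

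The remaining case, $a_k<b_k$ at the last differing index, is the main obstacle. Here I would argue by induction on $r$, with $s$ arbitrary. Set $I'=(u_1,\dots,u_{r-1})$; it again has projective dimension $\le1$ and a linear resolution by Corollary~\ref{easy} and Proposition~\ref{linear}. If necessary I would change the order so that $a_r$ is compared first, in increasing order. That is, list the blocks $u_r^t(I')^{s-t}$ for $t=0,1,\dots,s$, each internally ordered by the inductive linear-quotient order. Within a block the colon contains the inductive colon for $(I')^{s-t}$. Across blocks, I would use $(u_1,\dots,u_{r-1})\cap(u_r)=(y_ru_r)$ together with Lemma~\ref{unique} to show that the earlier blocks contribute exactly $y_r$ whenever $t>0$. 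The key computation is
\[
(I')^{s-t+1}(u_r)^{t-1}\cap u_r^t(I')^{s-t}\subseteq y_r\,u_r^t(I')^{s-t}.
\]
I expect to prove this using that $y_r$ divides every $u_i$ with $i<r$ exactly once (squarefreeness), so that $y_r$-degrees count the $u_i$-factors with $i<r$, exactly as in Lemma~\ref{unique}. Once all colons are shown to be generated by variables, $I^s$ has linear quotients.
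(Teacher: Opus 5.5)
Your setup, the inclusion $\supseteq$, and the case $a_k>b_k$ at the last differing index are correct. The lex-order argument, however, stops exactly where the difficulty lies, so as written the proposal has a gap.

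\textbf{The gap in the lex route.} If the last differing index $k$ has $a_k<b_k$, then $\deg_{y_k}u^b-\deg_{y_k}u^a=a_k-b_k<0$, and possibly $a_k=0$. So you must use some other $m$ with $a_m>b_m$. Writing $c=b-a$, such an $m$ gives $\deg_{y_m}u^b-\deg_{y_m}u^a=\sum_{i<m}c_i+\sum_{i>m}c_i\deg_{y_m}u_i$. This depends on which later generators contain $y_m$, and your argument never controls that. The obvious choices really do fail. Take $u_k=x_1\cdots x_5/(x_kx_{k+1})$ for $k=1,\dots,4$, so $y_k=x_{k+1}$, with $a=(0,1,1,0)$ and $b=(1,0,0,1)$. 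Then $u^b/\gcd(u^a,u^b)=x_3^2$. Neither $y_4$ (here $a_4=0$) nor $y_3=x_4$ (the largest $m$ with $a_m>b_m$) works; only $y_2=x_3$ does.

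\textbf{How the lex claim can be completed.} Your claim $J_a=(y_k:k\ge2,\ a_k>0)$ is nevertheless true, but it needs a structural input you did not supply.
\begin{enumerate}
\item Since $u_i=z_iu_{j_i}/y_i$ and $y_i\ne y_m$ for $i>m$, the set $R_m=\{i: y_m\nmid u_i\}$ consists of $m$ and some of its descendants in the rooted tree $i\mapsto j_i$. It is closed under passing to parents up to $m$. Hence the part of the subtree below $m$ lying outside $R_m$ is a disjoint union of full subtrees rooted at proper descendants of $m$.
\item Let $\sigma(v)$ be the sum of the $c_i$ over the subtree below $v$. The indices after the first nonzero entry $c_p>0$ form full subtrees with total $-c_p<0$, so $\sigma(v)<0$ for some $v$.
\item Choose such a $v$ of maximal index. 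Then every proper subtree below $v$ has $\sigma\ge 0$, so $c_v\le\sigma(v)<0$; thus $a_v>0$, and $v\ge 2$ since $\sigma(1)=0$.
\item Also $\sum_{i\in R_v}c_i\le\sigma(v)<0$, which means $\deg_{y_v}u^b>\deg_{y_v}u^a$.
\end{enumerate}

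\textbf{Your fallback is the paper's proof.} The paper's reverse-lexicographic order lists the blocks $u_r^t(I')^{s-t}$ by increasing $t$ and orders each block recursively, with induction on $r$ and $s$ arbitrary. If you take that route, commit to this order and tighten two points.
\begin{enumerate}
\item Within block $t$ the colon \emph{equals} the inductive colon of $(I')^{s-t}$, because $u_r^t$ cancels. Mere containment would not show the colon is generated by variables.
\item All earlier blocks $t'<t$ contribute, not only $t-1$ as in your displayed inclusion. The $y_r$-degree count handles them all at once, since $\deg_{y_r}u^a=s-a_r$ for every exponent vector $a$. The variable $y_r$ itself comes from $b-e_r+e_{j_r}$, exactly as in your $\supseteq$ step.
\end{enumerate}
Finally, the induction only needs that $u_1,\dots,u_{r-1}$ inherit the variable colons, which is immediate. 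Corollary~\ref{easy} is the wrong citation here, since it concerns a different, tree-derived order.
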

\begin{proof}
According to Proposition~\ref{linear}, we may assume that \(I = (u_1, \cdots, u_r)\) with \(\deg(u_i) = d\) for all \(i = 1, \cdots, r\) such that \((u_1, \ldots, u_{i-1}) : u_i\) is generated by a single variable for \(i = 2, \ldots, r\). For \(\mathbf{a} = (a_1, \ldots, a_r)\in \mathbb{Z}_{\geq 0}^r\), we set
\[
\mathbf{u}^{\mathbf{a}} = u_1^{a_1}\cdots u_r^{a_r}.
\]
Then \(G(I^s) = \{\mathbf{u}^{\mathbf{a}} \mid \mathbf{a} \in \mathbb{Z}_{\geq 0}^r(s)\}\) by Lemma~\ref{unique}.
We order the set \(\mathbb{Z}_{\geq 0}^r(s)\) by the following rule: for any distinct vectors \((a_1, \ldots, a_r), (b_1, \ldots, b_r) \in \mathbb{Z}_{\geq 0}^r(s)\), we say \((a_1, \ldots, a_r) \leq (b_1, \ldots, b_r)\) if and only if there exists some integer \(i\) with \(1 \leq i \leq r\) such that \(a_i > b_i\) and \(a_j = b_j\) for all \(j = i+1, \ldots, r\) (this is a reverse lexicographic order). Then, accordingly, the minimal generating set \(G(I^s)\) can be ordered as \(\mathbf{u}^{\mathbf{a}_1}, \mathbf{u}^{\mathbf{a}_2}, \ldots, \mathbf{u}^{\mathbf{a}_m}\), where \(\mathbf{a}_1 \geq \mathbf{a}_2 \geq \cdots \geq \mathbf{a}_m\) under the above order, with \(m\) being the number of elements in \(\mathbb{Z}_{\geq 0}^r(s)\). We proceed by induction on \(r\) to show that \(I^s\) has linear quotients with respect to this order.

For the base case when \(r=1\), there is nothing to prove. Suppose next that \(r\geq 2\). Choose an arbitrary generator \(\mathbf{u}^{\mathbf{b}} \in G(I^s)\) with exponent vector \(\mathbf{b}=(b_1,b_2,\dots,b_r)\). Let \(I_{\mathbf{b}}\) be the ideal generated by all monomials \(\mathbf{u}^{\mathbf{a}} \in G(I^s)\) satisfying \(\mathbf{a} > \mathbf{b}\). If \(b_r = 0\), then every minimal monomial generator of \(I_{\mathbf{b}}\) is of the form \(u_1^{a_1}\cdots u_{r-1}^{a_{r-1}}\) for some \((a_1,\dots,a_{r-1})\in \mathbb{Z}_{\geq 0}^{r-1}(s)\) with $(a_1,\ldots,a_{r-1})>(b_1,\ldots,b_{r-1})$. By the induction hypothesis, the colon ideal \(I_{\mathbf{b}}: \mathbf{u}^{\mathbf{b}}\) is therefore generated by variables.

Suppose now that \(b_r > 0\). We write \(I_{\mathbf{b}} = J_{\mathbf{b}} + K_{\mathbf{b}}\), where \(J_{\mathbf{b}}\) is generated by the monomials \(\mathbf{u}^{\mathbf{a}} \in G(I^s)\) with \(\mathbf{a} > \mathbf{b}\) and \(a_r = b_r\), and \(K_{\mathbf{b}}\) is generated by the monomials \(\mathbf{u}^{\mathbf{a}} \in G(I^s)\) with \(\mathbf{a} > \mathbf{b}\) and \(a_r < b_r\). Set \(\mathbf{b'}= (b_1, \ldots, b_{r-1}, 0)\in \mathbb{Z}_{\geq 0}^{r}(s-b_r)\). Then \(J_{\mathbf{b}} : \mathbf{u}^{\mathbf{b}} = I_{\mathbf{b}'} : \mathbf{u}^{\mathbf{b}'}\), which is again generated by variables by the induction hypothesis. Here, \(I_{\mathbf{b}'}\) is the monomial ideal generated by \(\mathbf{u}^{\mathbf{a}}\) with \(\mathbf{a}\in \mathbb{Z}_{\geq 0}^{r}(s-b_r)\) and \(\mathbf{a}>\mathbf{b}'\).

Assume \((u_1, \ldots, u_{r-1}) : u_r\) is generated by a variable \(x\). We claim that \(K_{\mathbf{b}} : \mathbf{u}^{\mathbf{b}}\) is also generated by the variable \(x\). Note that there exists some \(1 \leq j_r < r\) such that \((u_{j_r}) : u_r = (x)\). Let \(\mathbf{a} = \mathbf{b} - \mathbf{e}_r + \mathbf{e}_{j_r}\), where \(\mathbf{e}_i\) denotes the standard basis vector with 1 in the \(i\)-th component and 0 elsewhere. Then \(\mathbf{a} > \mathbf{b}\) and \[(\mathbf{u}^{\mathbf{a}}) : \mathbf{u}^{\mathbf{b}}=(u_{j_r}) : u_r = (x),\]  which implies \(x \in K_{\mathbf{b}} : \mathbf{u}^{\mathbf{b}}\).

For any \(\mathbf{a} \in \mathbb{Z}_{\geq 0}^r(s)\) satisfying \(\mathbf{a} > \mathbf{b}\) and \(a_r < b_r\), we claim that \(x\) divides \(\frac{\operatorname{lcm}(\mathbf{u}^{\mathbf{a}}, \mathbf{u}^{\mathbf{b}})}{\mathbf{u}^{\mathbf{b}}}\). Indeed, since \(\deg_x(u_i) = 1\) for all \(1 \leq i \leq r-1\) and \(\deg_x(u_r) = 0\), we compute
\[
\deg_x(\mathbf{u}^{\mathbf{a}}) = a_1 + \cdots + a_{r-1} = s - a_r,\qquad \deg_x(\mathbf{u}^{\mathbf{b}}) = b_1 + \cdots + b_{r-1} = s - b_r.
\]
The condition \(a_r < b_r\) yields \(s - a_r > s - b_r\), which implies
\[
\deg_x\left( \frac{\operatorname{lcm}(\mathbf{u}^{\mathbf{a}}, \mathbf{u}^{\mathbf{b}})}{\mathbf{u}^{\mathbf{b}}} \right) > 0,
\]
verifying the claim. Consequently, \(\frac{\operatorname{lcm}(\mathbf{u}^{\mathbf{a}}, \mathbf{u}^{\mathbf{b}})}{\mathbf{u}^{\mathbf{b}}}\) lies in the principal ideal \((x)\).

Hence, the colon ideal \(K_{\mathbf{b}} : \mathbf{u}^{\mathbf{b}}\) is generated by \(x\). As a result, \(I_{\mathbf{b}} : \mathbf{u}^{\mathbf{b}} = J_{\mathbf{b}} : \mathbf{u}^{\mathbf{b}} + K_{\mathbf{b}} : \mathbf{u}^{\mathbf{b}}\) is generated by variables, which completes the argument.
\end{proof}

The assumption that \(I\) is squarefree cannot be removed in Proposition~\ref{power}.

\begin{Example}\em
Let \(I\subset S=\mathbb{K}[x_1,\ldots,x_5]\) be the monomial ideal with
\[
G(I)=\{x_1^2x_2^2,x_1x_2^2x_3, x_2^2x_3x_4,x_2x_3x_4x_5,x_3^2x_4x_5\}.
\]
Then \(I\) has linear quotients with respect to the given order, and under this ordering each successive colon ideal is generated by a variable. Thus, \(I\) has projective dimension 1. However, \(I^2\) does not admit a linear resolution. In fact, \(I^2\) has the following minimal free resolution by Macaulay2:
\[
0\rightarrow S(-12)\rightarrow S(-11)^2\oplus S(-10)^6\rightarrow S(-10)\oplus S(-9)^{20}\rightarrow S(-8)^{15}\rightarrow I^2\rightarrow0.
\]
\end{Example}
By the discussion above, squarefree monomial ideals of projective dimension 1 that have a linear resolution possess nice properties. We call such ideals \emph{nice ideals}.
In the remainder of this section, we devote ourselves to classifying nice ideals. First, we recall the definitions of support for a monomial and a monomial ideal: for any monomial \(u\), \(\operatorname{supp}(u)\) denotes the set of variables dividing \(u\); for a monomial ideal \(I\), \(\operatorname{supp}(I)\) is the union of \(\operatorname{supp}(u)\) for all monomials \(u \in G(I)\), where \(G(I)\) denotes the minimal generating set of \(I\).
\begin{Definition} \label{type}
Let \(I\) be a nice ideal, with minimal generating set \(G(I) = \{u_1, \ldots, u_r\}\) (where \(r \geq 2\)). Let \(I_i = (u_1, \ldots, u_i)\) for \(i = 1, \ldots, r\). Assume further that for each \(i = 2, \ldots, r\), the colon ideal \(I_{i-1} : u_i\) is generated by a single variable, and \(\deg(u_i) = d\) for all \(i = 1, \ldots, r\). We say that \(I\) is \emph{of type} \(t\) if \(|\operatorname{supp}(I)| = d + t\), denoted by \(\mathrm{type}(I)=t\).
\end{Definition}
We observe that \(|\operatorname{supp}(I_2)| = d + 1\) and \(|\operatorname{supp}(I_{i+1})| \leq |\operatorname{supp}(I_i)| + 1\) for all \(i = 1, \ldots, r-1\). It follows that \(d + 1 \leq |\operatorname{supp}(I)| = |\operatorname{supp}(I_r)| \leq d + r - 1\), so    $$1\leq \mathrm{type}(I) \leq r-1.$$ The structure of the class of nice ideals with type 1 or 2 can be explicitly described.
\begin{Proposition}
Let \(I\) be a nice ideal. Let \(\alpha\) denote the product of all variables in \(\operatorname{supp}(I)\).
Then \(\mathrm{type}(I)=1\) if and only if there exists a subset \(A\subseteq \operatorname{supp}(I)\) with \(|A|=r\) such that \(I\) is generated by the monomials \(\dfrac{\alpha}{x}\) for all \(x\in A\).
\end{Proposition}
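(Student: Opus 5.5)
Both directions reduce to counting the support. Throughout, fix the ordering $u_1,\ldots,u_r$ from Definition~\ref{type}, so all $u_i$ are squarefree of degree $d$ and each $I_{i-1}:u_i$ is generated by a single variable.

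\emph{($\Rightarrow$)} Suppose $\mathrm{type}(I)=1$, so $|\operatorname{supp}(I)|=d+1$. Each $u_i$ is a squarefree monomial of degree $d$ whose support lies in the $(d+1)$-element set $\operatorname{supp}(I)$. Hence $u_i$ omits exactly one variable $x_i\in\operatorname{supp}(I)$, and therefore $u_i=\alpha/x_i$. The generators are distinct, so the variables $x_i$ are pairwise distinct. Setting $A=\{x_1,\ldots,x_r\}$ gives $|A|=r$, and $I$ is generated by $\alpha/x$ for $x\in A$. This direction is immediate; I would not use the colon condition here at all.

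\emph{($\Leftarrow$)} Suppose $G(I)=\{\alpha/x \mid x\in A\}$ with $|A|=r\ge 2$. Let $N=|\operatorname{supp}(I)|$, so $\deg(\alpha)=N$ and every generator has degree $N-1$; thus $d=N-1$. The only point to check is that $\operatorname{supp}(I)$, as computed from these generators, really equals the support of $\alpha$. Every variable $y$ of $\alpha$ divides $\alpha/x$ for any $x\in A$ with $x\neq y$, and such an $x$ exists because $r\ge2$. Hence $|\operatorname{supp}(I)|=d+1$, that is, $\mathrm{type}(I)=1$. I would also note in passing that $\mathrm{type}(I)$ depends only on $|\operatorname{supp}(I)|$ and $d$, not on the chosen ordering. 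This is the only subtlety: the definition fixes an ordering, but its value is order-independent.

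The main obstacle is essentially nil. One should simply be careful that the hypothesis ``$I$ nice'' is what guarantees squarefreeness and equigeneration in degree $d$. These two facts are exactly what the counting in the forward direction uses.
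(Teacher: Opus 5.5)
Your proof is correct. The forward direction is exactly what the paper calls ``clear'': a squarefree degree-$d$ monomial supported in a $(d+1)$-element set must be $\alpha/x$ for a single omitted variable $x$, and distinct generators omit distinct variables.

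The two routes differ in the converse. You take the standing hypothesis ``$I$ is nice'' literally, so niceness is given and only the count $d=\deg(\alpha)-1=|\operatorname{supp}(I)|-1$ remains. The paper instead uses its sufficiency argument to show that the ideal $I_A$ generated by the monomials $\alpha/x$, $x\in A$, is itself nice. It does this via $(\alpha/x):(\alpha/y)=(y)$ for distinct $x,y\in A$, which through Theorem~\ref{main} gives projective dimension one and linear quotients in every ordering, and it leaves the trivial type count implicit.

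Your version is the minimal proof of the proposition as stated. The paper's extra step buys a stronger statement, namely that the description is a genuine classification: every ideal generated by $\alpha/x$, $x\in A$, with $|A|\ge 2$ is automatically a nice ideal of type~1, so niceness never has to be checked separately.

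Your check that $\operatorname{supp}(I)=\operatorname{supp}(\alpha)$ is automatic from the way $\alpha$ is defined, but it is harmless. It is also what one would need when starting from an arbitrary $\alpha$ and building $I_A$ from it, as the paper effectively does.
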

\begin{proof}
The necessity is clear. Let \(I_A\) be the monomial ideal generated by \(\frac{\alpha}{x}\) for all \(x\in A\). For any distinct \(x,y\in A\), we have
\[
\left(\frac{\alpha}{x}\right): \frac{\alpha}{y} = (y).
\]
Thus, \(I_A\) is a nice ideal (i.e., it has projective dimension \(1\) and admits linear quotients with respect to any ordering of its minimal generators). This completes the proof of sufficiency.
\end{proof}
Let \( G \) be a simple graph with vertex set \( [n] = \{1, 2, \ldots, n\} \) and edge set \( E(G) \). Recall that the \emph{complementary edge ideal} of \( G \) is defined as
\[
I_c(G) = \left( \frac{\mathbf{x}_{[n]}}{x_jx_k} : \{j, k\} \in E(G) \right),
\]
where \( \mathbf{x}_{[n]} = \prod_{j=1}^n x_j \). Ficarra and Moradi \cite[Corollary 3.2]{FM1} and Hibi et al. \cite[Theorem 2.2]{HQM} showed that \( I_c(G) \) has a linear resolution if and only if \( G \) is connected. For connected graphs \(G\), it was proved in \cite[Proposition 4.9]{LWZ} that \( I_c(G) \) has projective dimension 1 if and only if \( G \) is a tree. Thus, if \(G\) is a tree, then \(I_c(G)\) is a nice ideal of type 2. The converse of this result holds essentially.
\begin{Proposition}
Let \(I\) be a nice ideal. Then \(\mathrm{type}(I)=2\) if and only if there exists a monomial \(u\) and a tree \(G\) such that \(I = uI_c(G)\). Moreover, \(I=I_c(G)\) (i.e., \(u=1\)) if and only if \(r = d+1\).
\end{Proposition}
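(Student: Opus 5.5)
Sufficiency is the easy direction. Multiplying by a monomial \(u\) leaves all colon ideals unchanged, and it shifts every Betti number uniformly. So \(uI_c(G)\) is squarefree precisely when \(u\) is a product of variables outside \(\operatorname{supp}(I_c(G))\). In that case it is nice, because \(I_c(G)\) is nice for a tree \(G\) by \cite{FM1,HQM,LWZ}. Its support has size \(\deg u+n\) and its degree is \(\deg u+n-2\), so the type is 2.

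For necessity, let \(I\) be nice of type 2. Set \(u=\gcd(u_1,\dots,u_r)\) and \(I'=I/u\). Dividing by \(u\) preserves squarefreeness, the successive colons and linear quotients. It also preserves the type, since \(u\) is squarefree and every variable of \(u\) lies in \(\operatorname{supp}(I)\). So we may assume \(\gcd(G(I))=1\). Put \(V=\operatorname{supp}(I)\) with \(|V|=d+2\) and \(\alpha=\prod_{x\in V}x\). Every generator is squarefree of degree \(d\) inside \(V\), so each \(u_i=\alpha/(x_{j}x_{k})\) for a unique pair \(e_i=\{x_j,x_k\}\). Let \(G\) be the graph on \(V\) with edges \(e_1,\dots,e_r\); then \(I=I_c(G)\). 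The gcd condition says every vertex of \(V\) lies in some edge, so \(G\) has no isolated vertices on \(V\).

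It remains to show that \(G\) is a tree. I would argue directly with the admissible ordering, which avoids having to check that the cited results apply on exactly this vertex set. For generators \(\alpha/e\) and \(\alpha/f\) we have \((\alpha/e):(\alpha/f)=(f\setminus e)\) as a monomial. This is a single variable exactly when \(e\cap f\neq\emptyset\), and has degree 2 when \(e\cap f=\emptyset\). The colon \(I_{k-1}:u_k\) is generated by these monomials. It is principal, generated by one variable, exactly when the edges \(e_1,\dots,e_{k-1}\) meeting \(e_k\) all meet it in the same vertex \(y\), and at least one of them does. A disjoint edge contributes a degree-2 generator \(e_k\), which already lies in \((x)\) with \(x\in e_k\setminus\{y\}\), so disjoint edges are harmless.

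So each new edge \(e_k\) meets the union of the earlier edges in exactly one vertex. The first condition gives at least one shared vertex. The uniqueness of \(y\) rules out two shared vertices, for if \(e_k\) met earlier edges through both of its endpoints the colon would need two variables. Hence adding \(e_k\) attaches a new leaf to the connected graph built so far, and by induction the edge-induced graph stays a tree. Since every vertex is covered by an edge, \(G\) is a tree on \(V\). This edge-by-edge case check is the main step, but it is elementary.

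The count gives the last assertion. A tree on \(d+2\) vertices has \(d+1\) edges, so \(r=d+1\). This holds for both \(I\) and \(I/u\), since dividing by \(u\) does not change \(r\), so it does not by itself force \(u=1\). Both directions of the equivalence need the non-trivial case to be excluded.

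\emph{Presumably \(d\) in the last sentence refers to the degree after removing \(u\). Equivalently, \(u=1\) if and only if \(|\operatorname{supp}(I)|=r+1\).}

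With that reading, \(I\) is the complementary edge ideal precisely when \(\deg u=0\), that is, when \(d=r-1\).
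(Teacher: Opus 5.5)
\textbf{Necessity.} Your argument here is correct and more elementary than the paper's. Both proofs divide by \(u=\gcd(u_1,\dots,u_r)\) and realize \(I/u\) as the complementary edge ideal of a graph \(G\) on its support with no isolated vertices. The paper then cites the linear-resolution criterion of \cite{FM1,HQM} for connectivity and \cite[Proposition~4.9]{LWZ} for acyclicity. You instead compute \((\alpha/e):(\alpha/f)=(x_{f\setminus e})\) and read off that each new edge meets the earlier ones in exactly one vertex. That is self-contained, and it also shows the admissible order is a leaf order of the edges.

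\textbf{The final assertion.} This is the genuine problem: you misdiagnose the statement. After normalizing by \(u\), your \(d\) silently becomes the degree of \(I/u\). That degree is \(d-k\), where \(k=\deg u\) and \(d\) is the generating degree of \(I\) fixed in the definition of type. Redo the count without this slip. Then \(|\operatorname{supp}(I/u)|=d+2-k\), the tree has \(d+1-k\) edges, and so \(r=d+1-k\). Hence \(r=d+1\) if and only if \(k=0\), i.e.\ \(u=1\). This is exactly the paper's argument, and the statement is correct as written.

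Your claim that \(r=d+1\) ``holds for both \(I\) and \(I/u\)'' is false. For example, \(I=x_5\,I_c(P_4)=(x_3x_4x_5,\,x_1x_4x_5,\,x_1x_2x_5)\) is nice of type 2 with \(d=r=3\). Under your proposed reinterpretation (\(d\) as the degree after removing \(u\)), the equivalence would actually fail, since then \(r=d+1\) always holds. Your reformulation \(u=1\iff|\operatorname{supp}(I)|=r+1\) is right. But it is right only because \(|\operatorname{supp}(I)|=d+2\) with the original \(d\), so it is just the stated condition.

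\textbf{Sufficiency caveat.} One more point, which you share with the paper's one-line sufficiency argument. The count \(|\operatorname{supp}(uI_c(G))|=\deg u+n\) needs every vertex of \(G\) to lie outside some edge. This fails for a star, whose center lies in every edge and so divides no generator. For instance, \(I_c(P_3)=(x_1,x_3)\) is nice but of type 1. So the ``if'' direction requires \(G\) not to be a star. In the necessity direction this is automatic, since every vertex of \(\operatorname{supp}(I/u)\) divides some generator.
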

\begin{proof}
The sufficiecy follows from the discussion preceding the proposition. For the converse, we assume \(I\) is a nice ideal of type 2. Set \(u = \gcd(u_1, \ldots, u_r)\) and write \(u_i = u w_i\) for all \(i = 1, \ldots, r\). Let \(J = (w_1, \ldots, w_r)\) and denote \(k = \deg(u)\). Then \(|\operatorname{supp}(J)| = d+2 - k\). Let \(\operatorname{supp}(J) = \{x_1, \ldots, x_{d+2 - k}\}\) and define a graph \(G\) as follows:

\(V(G) = \operatorname{supp}(J)\) and \(E(G) = \{V(G) \setminus \operatorname{supp}(w_i) \mid i = 1, \ldots, r\}\).

Since \(\gcd(w_1, \ldots, w_r) = 1\), for any \(j \in [d+2 - k]\), there exists some \(i \in [r]\) such that \(x_j \notin \operatorname{supp}(w_i)\), which implies \(x_j \in V(G) \setminus \operatorname{supp}(w_i)\). Thus, \(x_j\) belongs to some edge of \(G\), so \(G\) contains no isolated vertices.
Note that \(J\) is the complementary edge ideal of \(G\). Since \(J\) has linear quotients, \(G\) is connected. Furthermore, since \(J\) has projective dimension 1, \(G\) is a tree by \cite[Proposition 4.9]{LWZ}. This proves the first conclusion.

For the second conclusion, we first note that for a tree \(G\), the number of edges satisfies \(|E(G)| = |V(G)| - 1\). Since \(|E(G)| = r\) and \(|V(G)| = d+2 - k\), we have \(r = (d+2 - k) - 1 = d+1 - k\). Thus, \(I=I_c(G)\) (i.e., \(u=1\)) if and only if \(k = 0\), which is equivalent to \(r = d+1\), as required.
\end{proof}

\section{Reproof of the Hilbert-Burch Lemma}

 In this section, using Theorem~\ref{main}, we present a purely combinatorial proof of the celebrated Hilbert-Burch lemma in the case of monomial ideals.  we see that We begin by fixing the necessary notation and restating the monomial Hilbert-Burch lemma as presented in \cite{HHBook}.

Let $I \subset S$ be a monomial ideal with $G(I) = \{u_1, \dots, u_r\}$, where $r \geq 2$. We introduce the $\binom{r}{2} \times r$ matrix
	\[
	A_I = (a_k^{(i,j)})_{1 \leq i < j \leq r, \, 1 \leq k \leq r}
	\]
	whose entries $a_k^{(i,j)} \in S$ are
	\[
	a_i^{(i,j)} =	\frac{u_j}{\mathrm{gcd}(u_i,u_j)}, \quad a_j^{(i,j)} = -\frac{u_i}{\mathrm{gcd}(u_i,u_j)}, \quad \text{and} \quad a_k^{(i,j)} = 0 \ \text{if} \ k \notin \{i,j\}
	\]
	for all $1 \leq i < j \leq r$ and for all $1 \leq k \leq r$.

We now present a slightly modified variant of \cite[Lemma~9.2.4]{HHBook}, as follows.
\begin{Theorem}
\label{Hilbert}
Let \(I \subset S\) be a monomial ideal with \(G(I) = \{u_1, \dots, u_r\}\) and \(r \geq 2\).
The following conditions are equivalent:
\begin{enumerate}
    \item[(a)] \(\pd(I) = 1\);
    \item[(b)] there exists an \((r-1)\times r\) submatrix \(A_I^\#\) of \(A_I\) such that
    \[
    |\det(A_I^\#(j))| = \frac{u_j}{g_r} \quad \text{for all } j=1,2,\ldots,r.
    \]
    Here, \(A_I^\#(j)\) denotes the square matrix obtained by removing the \(j\)-th column of \(A_I^\#\), \(|\cdot|\) denotes the monomial part obtained by discarding unit coefficients, and \(g_r = \gcd(u_1,\dots,u_r)\).
\end{enumerate}
\end{Theorem}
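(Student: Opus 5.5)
We prove (a)$\Rightarrow$(b) using Theorem~\ref{main}, which gives an ordering whose rows form a tree. For (b)$\Rightarrow$(a), we first show that the rows of $A_I^\#$ must index a spanning tree. We then turn the determinant identity into a variable-by-variable counting identity and conclude with Proposition~\ref{basic}.

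\emph{(a)$\Rightarrow$(b).} By Theorem~\ref{main}, reorder so that for each $k\ge 2$ there is $j_k<k$ with $\operatorname{lcm}(u_{j_k},u_k)\mid \operatorname{lcm}(u_i,u_k)$ for all $i<k$. Reordering generators only permutes columns and rows of $A_I$, up to sign. Take $A_I^\#$ to be the rows indexed by the pairs $\{j_k,k\}$, $k=2,\dots,r$. Set $w_k=\operatorname{lcm}(u_{j_k},u_k)/u_k=u_{j_k}/\gcd(u_{j_k},u_k)$ and $g_k=\gcd(u_1,\dots,u_k)$.

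The key local identity is $g_{k-1}=g_k w_k$, which we check exponentwise for each variable $x$. Write $e_i=\deg_x(u_i)$.
\begin{itemize}
\item If $e_k<e_{j_k}$, the divisibility forces $e_i\ge e_{j_k}$ for all $i<k$. Hence $\deg_x g_{k-1}=e_{j_k}$ and $\deg_x g_k=e_k$, so the difference equals $\deg_x w_k$.
\item If $e_k\ge e_{j_k}$, then $\deg_x g_k=\deg_x g_{k-1}$ and $\deg_x w_k=0$.
\end{itemize}

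Now induct on $r$, the case $r=2$ being trivial. Column $r$ of $A_I^\#$ has a single nonzero entry, in row $\{j_r,r\}$.
\begin{itemize}
\item For $j\ne r$, expand $\det A_I^\#(j)$ along column $r$. This gives $|\det| = w_r\cdot u_j/g_{r-1}=u_j/g_r$, using the induction hypothesis for $I_{r-1}$.
\item For $j=r$, row $\{j_r,r\}$ has a single entry, $\pm u_r/\gcd(u_{j_r},u_r)$, in column $j_r$. Expanding along it gives
\[
\frac{u_r}{\gcd(u_{j_r},u_r)}\cdot\frac{u_{j_r}}{g_{r-1}}=\frac{u_rw_r}{g_{r-1}}=\frac{u_r}{g_r}.
\]
\end{itemize}

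\emph{(b)$\Rightarrow$(a).} Let $T$ be the graph on $[r]$ whose $r-1$ edges are the pairs indexing the rows of $A_I^\#$.

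First we show $T$ is a tree. If $T$ were disconnected, deleting column $j$ makes the matrix block-structured by components. A nonzero determinant then forces $|E(K)|=|K|-1$ for the component $K$ containing $j$ and $|E(K')|=|K'|$ for every other component $K'$. Choosing $j$ in two different components gives contradictory counts. So $T$ is connected with $r-1$ edges, hence a tree.

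For a tree, expanding repeatedly at leaves away from $j$ gives
\[
|\det A_I^\#(j)|=\prod_{e=\{a,b\}}\frac{\operatorname{lcm}(u_a,u_b)}{u_b},
\]
where $b$ is the endpoint of $e$ farther from $j$. So (b) is equivalent to
\[
g_r\prod_{e=\{a,b\}\in E(T)}\operatorname{lcm}(u_a,u_b)=\prod_{v\in[r]}u_v .
\]

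Fix $x$ and write $e_v=\deg_x(u_v)$ and $V_t=\{v: e_v\ge t\}$. By layer-cake counting, the identity reads
\[
\sum_t\#\{\text{edges meeting }V_t\}+\min_v e_v=\sum_t|V_t| .
\]
The edges not meeting $V_t$ form a forest on $V\setminus V_t$, so there are at most $|V\setminus V_t|-c_t$ of them, where $c_t$ is the number of components of $V\setminus V_t$. Hence the number of edges meeting $V_t$ is at least $|V_t|-1+c_t$, with $c_t\ge1$ exactly when $t>\min_v e_v$. Summing gives the inequality $\ge$ termwise, so equality forces $c_t=1$ for every such $t$. That is, every sublevel set $\{v: e_v\le s\}$ induces a connected subtree of $T$.

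For $u\in L_I$, the set $\{i: u_i\mid u\}$ is an intersection over the variables $x$ of such sublevel sets. An intersection of subtrees of a tree is connected. So Proposition~\ref{basic} shows that $\mathcal C_\bullet(T,f)$ resolves $S/I$, giving $\pd(I)\le1$, and $r\ge2$ gives $\pd(I)=1$.

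The main obstacle is the last paragraph: showing that the single product identity forces connectivity of every sublevel set. The layer-cake inequality is the step to get exactly right.
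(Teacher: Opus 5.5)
Your proof is correct. Your (a)$\Rightarrow$(b) is the paper's argument. The identity $g_{k-1}=g_kw_k$ is Lemma~\ref{simple}, and your two cofactor expansions (along column $r$ for $j\neq r$, along row $\{j_r,r\}$ for $j=r$) are exactly the paper's induction on the matrices $B_k$. Your block-counting proof that $T$ is a tree is essentially the paper's linear-dependence argument in different words.

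The genuine difference is in the rest of (b)$\Rightarrow$(a).

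\begin{itemize}
\item The paper fixes a leaf ordering $i_1,\dots,i_r$ and expands only $\det A_I^\#(i_1)$. This gives the order-dependent identity $g_r\,u_{j_r}\cdots u_{j_3}=\prod_{k=2}^{r}\gcd(u_{j_k},u_{i_k})$. It then uses the min/max bookkeeping of Lemmas~\ref{key}, \ref{key2} and \ref{second}: each leaf attachment contributes a nonnegative defect, so equality forces every step to be tight. This recovers condition (2) of Theorem~\ref{main}, and the colon-ideal/mapping-cone argument there finishes.
\item You instead rewrite (b) as the root-free identity $g_r\prod_{\{a,b\}\in E(T)}\operatorname{lcm}(u_a,u_b)=\prod_v u_v$. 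By layer-cake counting and the exact edge count of induced forests, you show it holds precisely when every sublevel set of $v\mapsto\deg_x(u_v)$ induces a subtree. Proposition~\ref{basic} then applies.
\end{itemize}

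Your route is order-free and symmetric. It shows that, once $T$ is a tree, (b) for one column is equivalent to (b) for all columns. It also proves something stronger: the tree read off from the rows of $A_I^\#$ itself supports the minimal free resolution of $S/I$.

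In fact your equivalence also gives (a)$\Rightarrow$(b) directly, with no appeal to Theorem~\ref{main}. Each nonempty sublevel set $\{v:\deg_x(u_v)\le s\}$ equals $\{i: u_i\mid u\}$, where $u$ is the lcm of its members, so you can combine Proposition~\ref{sara} with Proposition~\ref{basic}.

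What the paper's route buys in exchange: it passes through the colon characterization, which is exactly what Section~4 sets out to show is equivalent to the Hilbert--Burch lemma. It also avoids the connectivity criterion of Proposition~\ref{basic} in this direction.
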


\begin{Remark}\em
The original statement of (b) requires that, after relabeling the rows of $A_I^\#$ if necessary,
\[
(-1)^j \det(A_I^\#(j)) = \frac{u_j}{g_r} \quad \text{for all } j.
\]
We adopt the present, simpler formulation.
\end{Remark}

To prove this result, especially the implication \((b) \Rightarrow (a)\), we need some combinatorial preparations.
\begin{Lemma}\label{key} Let \(a_i\in \mathbb{Z}_{\geq 0}\) for \(i=1,\ldots,k\) and \(1\leq j_k<k\). Then

\noindent$\mathrm (1)$ If \(a_{j_k}\leq \max\{a_i,a_k\}\) for all \(i=1,\ldots,k-1\), we have $$\min\{a_1,\ldots,a_k\}+a_{j_k}=\min\{a_1,\ldots,a_{k-1}\}+\min\{a_{j_k},a_k\}.$$
$\mathrm (2)$ If \(a_{j_k}> \max\{a_i,a_k\}\) for some \(i\in \{1,\ldots,k-1\}\), we have $$\min\{a_1,\ldots,a_k\}+a_{j_k}>\min\{a_1,\ldots,a_{k-1}\}+\min\{a_{j_k},a_k\}.$$
$\mathrm (3)$ In general, $$\min\{a_1,\ldots,a_k\}+a_{j_k}\geq \min\{a_1,\ldots,a_{k-1}\}+\min\{a_{j_k},a_k\}.$$
\end{Lemma}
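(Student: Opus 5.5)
We plan an elementary case analysis on the integers $a_1,\ldots,a_k$. Put $m=\min\{a_1,\ldots,a_{k-1}\}$, so that $\min\{a_1,\ldots,a_k\}=\min\{m,a_k\}$. Every statement then compares $\min\{m,a_k\}+a_{j_k}$ with $m+\min\{a_{j_k},a_k\}$. We first prove (3) in general, and then check when equality holds; this yields (1) and (2) together. Since $j_k<k$, we always have $m\le a_{j_k}$.

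For (3), we split according to the position of $a_k$.
\begin{itemize}
\item If $a_k\ge a_{j_k}$, then also $a_k\ge m$. Both sides equal $m+a_{j_k}$, so equality holds.
\item If $m\le a_k<a_{j_k}$, the left side is $m+a_{j_k}$ and the right side is $m+a_k$. The left side is strictly larger.
\item If $a_k<m$, the left side is $a_k+a_{j_k}$ and the right side is $m+a_k$. Since $a_{j_k}\ge m$, the left side is at least the right side, with equality exactly when $a_{j_k}=m$.
\end{itemize}
Hence (3) holds, and equality holds if and only if either $a_k\ge a_{j_k}$, or $a_k<m$ and $a_{j_k}=m$.

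To get (1) and (2), we translate the hypothesis ``$a_{j_k}\le\max\{a_i,a_k\}$ for all $i<k$'' into this equality condition. Suppose the hypothesis holds. If $a_k\ge a_{j_k}$, equality holds. Otherwise $a_k<a_{j_k}$, and the hypothesis forces $a_{j_k}\le a_i$ for every $i<k$, so $a_{j_k}=m$. If $a_k<m$ we are in the equality case. If $m\le a_k$, then $a_{j_k}=m\le a_k$, which contradicts $a_k<a_{j_k}$. So equality always holds, proving (1).

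Conversely, suppose some $i<k$ has $a_{j_k}>\max\{a_i,a_k\}$. Then $a_k<a_{j_k}$, which rules out the first equality case. Also $a_{j_k}>a_i\ge m$, so $a_{j_k}\ne m$, which rules out the second. The inequality from (3) is therefore strict, proving (2).

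The main obstacle is the bookkeeping of the equality case of (3). The key observation is that the hypothesis of (1), combined with $a_k<a_{j_k}$, pins $a_{j_k}$ down to $m$. Statement (3) also follows directly from (1) and (2), since the two hypotheses are complementary.
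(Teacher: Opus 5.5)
Your proof is correct and is essentially the paper's argument: an elementary case analysis comparing $a_k$ with $m=\min\{a_1,\ldots,a_{k-1}\}$ and with $a_{j_k}$. It uses the same key step, namely that the hypothesis of (1) together with $a_k<a_{j_k}$ forces $a_{j_k}=m$. The only difference is the order of the parts: you prove (3) first with an explicit equality criterion and read off (1) and (2), whereas the paper proves (1) and (2) directly and deduces (3).
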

\begin{proof} We prove (1) by distinguishing two cases. When \(a_{j_k}\leq a_k\), the left-hand side becomes \(\min\{a_1,\ldots,a_{k-1}\}+a_{j_k}\). Since $\min\{a_{j_k},a_k\}=a_{j_k}$, the equality holds. When \(a_{j_k}>a_k\), then \(\min\{a_{j_k},a_k\}=a_{k}\). By assumption, \(a_{j_k}\leq\max\{a_i,a_k\}\) for all \(i<k\), and since \(a_{j_k}>a_k\), this implies \(a_{j_k}\leq a_i\) for all \(i<k\). Thus \(\min\{a_1,\ldots,a_{k-1}\}=a_{j_k}\), and the right-hand side becomes \(a_{j_k}+a_k\). On the other hand, \(\min\{a_1,\ldots,a_k\}=a_k\), so the equality holds.

For (2), let \(a_{k_0}=\min\{a_1,\ldots,a_{k-1}\}\), where $1\leq k_0\leq k-1$. By assumption, we have \(a_{j_k}>a_{k_0}\) and \(a_{j_k}>a_k\). It follows that $$\min\{a_1,\ldots,a_k\}+a_{j_k}=\min\{a_k+a_{j_k}, a_{k_0}+a_{j_k}\}>a_{k_0}+a_k,$$ as required.

Conclusion (3) follows directly from (1) and (2).
\end{proof}
\begin{Lemma} \label{key2} Let \(r\geq 3\), \(a_k\in \mathbb{Z}_{\geq 0}\) for \(k=1,\ldots,r\), and \(1\leq j_k<k\) for \(k=2,\ldots,r\). The following are equivalent:

$\mathrm (1)$ \(\min\{a_1,\ldots,a_r\}+a_{j_r}+\cdots+a_{j_3}=\min\{a_{j_r},a_r\}+\cdots+\min\{a_{j_3},a_3\}+\min\{a_1,a_2\}\);

$\mathrm (2)$ \(a_{j_k}\leq \max\{a_i, a_k\}\) for all \(i=1,\ldots,k-1\) and \(k=3,\ldots,r\).
\end{Lemma}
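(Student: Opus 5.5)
The plan is to prove the equivalence by telescoping Lemma~\ref{key}. For each $k=2,\ldots,r$ define the nonnegative integer
\[
D_k=\min\{a_1,\ldots,a_k\}+a_{j_k}-\min\{a_1,\ldots,a_{k-1}\}-\min\{a_{j_k},a_k\}.
\]
By Lemma~\ref{key}(3), $D_k\ge 0$ for every $k\ge 2$.

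For $k=2$ we must have $j_2=1$. Then
\[
D_2=\min\{a_1,a_2\}+a_1-a_1-\min\{a_1,a_2\}=0
\]
identically. This explains why condition (2) only ranges over $k\ge 3$, and why the term $\min\{a_1,a_2\}$ appears on the right-hand side of (1) with no matching $a_{j_2}$ on the left.

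Next sum $D_k$ over $k=3,\ldots,r$. The terms $\min\{a_1,\ldots,a_k\}-\min\{a_1,\ldots,a_{k-1}\}$ telescope to $\min\{a_1,\ldots,a_r\}-\min\{a_1,a_2\}$. Hence
\[
\sum_{k=3}^r D_k=\Bigl(\min\{a_1,\ldots,a_r\}+\sum_{k=3}^r a_{j_k}\Bigr)-\Bigl(\sum_{k=3}^r\min\{a_{j_k},a_k\}+\min\{a_1,a_2\}\Bigr).
\]
The right-hand side is exactly (left side of (1)) minus (right side of (1)). So (1) holds if and only if $\sum_{k=3}^r D_k=0$.

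Since every $D_k$ is nonnegative, $\sum_{k=3}^r D_k=0$ holds if and only if $D_k=0$ for each $k=3,\ldots,r$. Lemma~\ref{key} applied with $k$ in place of its index gives a dichotomy: if $a_{j_k}\le\max\{a_i,a_k\}$ for all $i<k$, then $D_k=0$ by part (1); otherwise $D_k>0$ by part (2). Thus $D_k=0$ exactly when the $k$-th instance of condition (2) holds, and (1)$\Leftrightarrow$(2) follows.

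No real obstacle is expected. The only points needing care are getting the telescoping indices right, and the observation that the $k=2$ case is absorbed as the base term $\min\{a_1,a_2\}$.
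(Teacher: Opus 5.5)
Your proposal is correct and is essentially the paper's argument. Both proofs sum the per-step relations of Lemma~\ref{key} over $k=3,\ldots,r$. You package this as a telescoping sum of nonnegative defects $D_k$, whereas the paper runs an induction for (2)$\Rightarrow$(1) and a minimal-counterexample contradiction for (1)$\Rightarrow$(2). Your version is a slightly cleaner presentation of the same idea.
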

\begin{proof} \((2)\Rightarrow (1)\) We use induction on \(r\).
For \(r=3\), Lemma~\ref{key} (with \(k=3\)) gives $$\min\{a_1,a_2,a_3\}+a_{j_3}=\min\{a_{j_3},a_3\}+\min\{a_1,a_2\},$$ which proves the base case.
Suppose \(r>3\). By induction hypothesis, $$\min\{a_1,\ldots,a_{r-1}\}+a_{j_{r-1}}+\cdots+a_{j_3}=\min\{a_{j_{r-1}},a_{r-1}\}+\cdots+\min\{a_{j_3},a_3\}+\min\{a_1,a_2\}.$$
By Lemma~\ref{key}, $$\min\{a_1,\ldots,a_r\}=\min\{a_1,\ldots,a_{r-1}\}-a_{j_r}+\min\{a_{j_r},a_r\}.$$
Summing these two equalities yields the desired result.

\((1)\Rightarrow (2)\) Assume for contradiction that there exists \(3 \leq k \leq r\) such that \(a_{j_k} > \max\{a_i, a_k\}\) for some \(i = 1, \ldots, k-1\). Let \(k_0\) be the minimal such \(k\).
By the induction step of \((2)\Rightarrow (1)\), $$\min\{a_1,\ldots,a_{k_0-1}\}+a_{j_{k_0-1}}+\cdots+a_{j_3}=\min\{a_{j_{k_0-1}},a_{k_0-1}\}+\cdots+\min\{a_{j_3},a_3\}+\min\{a_1,a_2\}$$ (if \(k_0=3\), this reduces to \(0=0\)).
By the choice of \(k_0\) and Lemma~\ref{key} (2), $$\min\{a_1,\ldots,a_{k_0}\}+a_{j_{k_0}}>\min\{a_1,\ldots,a_{k_0-1}\}+\min\{a_{j_{k_0}},a_{k_0}\}.$$
Summing this inequality with the previous equality gives $$\min\{a_1,\ldots,a_{k_0}\}+a_{j_{k_0}}+\cdots+a_{j_3}>\min\{a_{j_{k_0}},a_{k_0}\}+\cdots+\min\{a_{j_3},a_3\}+\min\{a_1,a_2\}.$$
For \(k=k_0+1,\ldots,r\), Lemma~\ref{key} (3) implies $$\min\{a_1,\ldots,a_{k}\}+a_{j_{k}}\geq\min\{a_1,\ldots,a_{k-1}\}+\min\{a_{j_{k}},a_{k}\}.$$
Summing these inequalities and canceling common terms, we obtain $$\min\{a_1,\ldots,a_r\}+a_{j_r}+\cdots+a_{j_3}>\min\{a_{j_r},a_r\}+\cdots+\min\{a_{j_3},a_3\}+\min\{a_1,a_2\},$$ contradicting (1). This completes the proof.
\end{proof}
	
	   Lemmas~\ref{key} and \ref{key2} can be translated into the statements regarding monomials.

\begin{Lemma}\label{second}
Let $u_1,\ldots,u_r$ be monomials with $r\geq 2$. For each integer $k$ satisfying $2\leq k\leq r$, let $j_k$ be an integer such that $1\leq j_k<k$. Suppose the equality
\[
\gcd(u_1,\ldots,u_r)\cdot u_{j_r}\cdot u_{j_{r-1}}\cdots u_{j_3}
=\gcd(u_{j_r},u_r)\cdot \gcd(u_{j_{r-1}},u_{r-1})\cdots \gcd(u_1,u_2)
\]
holds. Then $u_{j_k}\mid \operatorname{lcm}(u_i,u_k)$ for all $2\leq k\leq r$ and all $1\leq i<k$.

\end{Lemma}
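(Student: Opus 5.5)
The plan is to reduce the monomial identity to the numerical statement of Lemma~\ref{key2}, one variable at a time. Fix a variable $x=x_\ell$ and set $a_k=\deg_{x}(u_k)$ for $k=1,\ldots,r$. Since $\deg_x$ turns products into sums, $\gcd$ into $\min$, and $\operatorname{lcm}$ into $\max$, comparing the $x$-degrees on the two sides of the hypothesized equality gives
\[
\min\{a_1,\ldots,a_r\}+a_{j_r}+\cdots+a_{j_3}=\min\{a_{j_r},a_r\}+\cdots+\min\{a_{j_3},a_3\}+\min\{a_1,a_2\},
\]
where we use that $j_2=1$ necessarily, so that the last factor $\gcd(u_1,u_2)$ is $\gcd(u_{j_2},u_2)$.

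First we dispose of the small cases. For $k=2$ the claim $u_{j_2}=u_1\mid \operatorname{lcm}(u_1,u_2)$ is trivial, and if $r=2$ nothing more is needed. For $r\geq 3$, Lemma~\ref{key2} ((1)$\Rightarrow$(2)) applied to the $a_k$ above gives $a_{j_k}\leq \max\{a_i,a_k\}$ for all $3\leq k\leq r$ and $1\leq i<k$. This says exactly that $\deg_x(u_{j_k})\leq \deg_x(\operatorname{lcm}(u_i,u_k))$.

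Since $x$ was arbitrary, we then get $u_{j_k}\mid\operatorname{lcm}(u_i,u_k)$ for all $3\le k\le r$ and $i<k$. Together with the trivial case $k=2$, this completes the proof.

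There is no real obstacle here. The only points that need care are checking that the exponent translation is faithful, in particular that the index bookkeeping for the $\gcd(u_1,u_2)$ term matches Lemma~\ref{key2} exactly, and handling $r=2$ separately because Lemma~\ref{key2} assumes $r\ge3$.
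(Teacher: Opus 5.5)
Your proof is correct and is essentially the paper's argument. Like the paper, you fix a variable, pass to exponents so that the hypothesis becomes condition (1) of Lemma~\ref{key2}, dispose of $r=2$ as trivial, and invoke (1)$\Rightarrow$(2); your explicit remarks that $j_2=1$ and that the case $k=2$ is trivial (since Lemma~\ref{key2}(2) only covers $k\ge 3$) fill in small bookkeeping points the paper leaves implicit.
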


\begin{proof}
Since the equality of monomials is equivalent to the equality of their degrees in each variable, it suffices to prove the statement for the exponents of an arbitrary fixed variable $x$.
Let \(a_k = \deg_x(u_k)\) for \(k=1,\ldots,r\). Then the given equality becomes
\[
\min\{a_1,\ldots,a_r\} + a_{j_r} + \cdots + a_{j_3}
= \min\{a_{j_r},a_r\} + \cdots + \min\{a_1,a_2\},
\]
and the divisibility \(u_{j_k} \mid \operatorname{lcm}(u_i,u_k)\) is equivalent to
\[
a_{j_k} \leq \max\{a_i,a_k\} \quad \text{for all } 1\leq i<k.
\]
Since the assertion holds trivially for \(r=2\), we may assume \(r\geq 3\).
Now the conclusion follows from Lemma~\ref{key2}.
\end{proof}

\begin{Lemma}\label{simple}
Let \(u_1,\ldots,u_k\) be monomials with \(k\geq 2\). If there exists \(1\leq j_k<k\) such that \(u_{j_k}\mid \operatorname{lcm}(u_i,u_k)\) for all \(1\leq i<k\), then
\[
\frac{\gcd(u_1,\ldots,u_{k-1})}{\gcd(u_1,\ldots,u_k)}=\frac{\operatorname{lcm}(u_k,u_{j_k})}{u_k}=\frac{u_{j_k}}{\gcd(u_k,u_{j_k})}.
\]
\end{Lemma}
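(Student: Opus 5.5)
The plan is to verify both equalities one variable at a time. A monomial identity holds exactly when, for every variable $x$, the exponents of $x$ on both sides agree. Fix a variable $x$ and set $a_i=\deg_x(u_i)$ for $i=1,\ldots,k$. The hypothesis $u_{j_k}\mid \operatorname{lcm}(u_i,u_k)$ for all $i<k$ then becomes
\[
a_{j_k}\leq \max\{a_i,a_k\}\quad\text{for all } 1\leq i\leq k-1,
\]
which is exactly the hypothesis of Lemma~\ref{key}(1).

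For the first equality, the $x$-exponent of $\gcd(u_1,\ldots,u_{k-1})/\gcd(u_1,\ldots,u_k)$ is
\[
\min\{a_1,\ldots,a_{k-1}\}-\min\{a_1,\ldots,a_k\}.
\]
Lemma~\ref{key}(1) gives
\[
\min\{a_1,\ldots,a_k\}+a_{j_k}=\min\{a_1,\ldots,a_{k-1}\}+\min\{a_{j_k},a_k\},
\]
so this difference equals $a_{j_k}-\min\{a_{j_k},a_k\}$. That is precisely the $x$-exponent of $u_{j_k}/\gcd(u_k,u_{j_k})$, which proves the outer equality.

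The middle equality is the standard identity $\operatorname{lcm}(u,v)\gcd(u,v)=uv$. Its exponent form is $\max\{a,b\}+\min\{a,b\}=a+b$, and it shows
\[
\frac{\operatorname{lcm}(u_k,u_{j_k})}{u_k}=\frac{u_{j_k}}{\gcd(u_k,u_{j_k})}.
\]
This step needs no hypothesis at all.

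I don't expect a real obstacle, since all the case analysis has already been done in Lemma~\ref{key}(1). The only thing to check is that the first quotient is a genuine monomial, which holds because $\gcd(u_1,\ldots,u_k)$ divides $\gcd(u_1,\ldots,u_{k-1})$. Exponentwise this is the inequality $\min\{a_1,\ldots,a_k\}\le\min\{a_1,\ldots,a_{k-1}\}$, so the exponent computation is legitimate.
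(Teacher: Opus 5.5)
Your proposal is correct and follows essentially the same route as the paper: fix a variable $x$, pass to exponents $a_i=\deg_x(u_i)$, and read off the identities from Lemma~\ref{key}(1). The paper writes that relation as $\min\{a_1,\ldots,a_k\}=\min\{a_1,\ldots,a_{k-1}\}+a_k-\max\{a_{j_k},a_k\}$, which is your equation after using $\min+\max=$ sum, and your remark that the middle equality is just $\operatorname{lcm}(u,v)\gcd(u,v)=uv$, needing no hypothesis, is a small clarification the paper leaves implicit.
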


\begin{proof}
Fix an arbitrary variable \(x\) and let \(a_i = \deg_x(u_i)\) for \(i = 1, \ldots, k\). The condition \(u_{j_k}\mid \operatorname{lcm}(u_i,u_k)\) is equivalent to \(a_{j_k}\leq \max\{a_i,a_k\}\) for all \(i=1,\ldots,k-1\). By Lemma~\ref{key} (1), we have
\[
\min\{a_1,\ldots,a_k\} = \min\{a_1,\ldots,a_{k-1}\} + a_k - \max\{a_{j_k},a_k\}.
\]
This equality of exponents corresponds exactly to the monomial identities in the statement. Since this holds for every variable \(x\), the conclusion follows.
\end{proof}

\begin{proof}[Proof of Theorem~\ref{Hilbert}] Since the case $r=2$ is clear, we assume that $r\geq 3$ from the beginning.

$(b)\Rightarrow (a)$ Assume that  there exists an $(r-1)\times r$ submatrix $A_I^\#$ of $A_I$ such that
    $|\det(A_I^\#(j))| = \frac{u_j}{g_r}$  for all $j=1,\ldots,r$. Let \(\Gamma\) be the graph with vertex set
\[
V(\Gamma)=\{1,\ldots,r\}
\]
and edge set
\[
E(\Gamma)=\bigl\{\{i,j\}\mid \text{the row indexed by }\{i,j\} \text{ is a row of }A_I^\#\bigr\}.
\]
Then \(\Gamma\) has \(r\) vertices and \(r-1\) edges, since \(A_I^\#\) has exactly \(r-1\) rows. Clearly, \(\Gamma\) has no isolated vertices, because if $k$ is an isolated vertex, then the column of $A_I^\#$ indexed by $k$ is a zero vector, which is contradicted to the assumption that
\(A_I^\#(j)\) has nonzero determinant for every \(1\le j\le r\).

We claim that \(\Gamma\) is connected. Suppose, to the contrary, that \(\Gamma\) is disconnected. Since \(\Gamma\) has \(r\) vertices and \(r-1\) edges, it must have a connected component \(C\) satisfying \(|V(C)|>|E(C)|\). Without loss of generality, let \(V(C)=\{1,\ldots,k\}\), so \(2\le k<r\). Consider the submatrix \(A\) of \(A_I^\#\) consisting of the columns indexed by \(1,\ldots,k\). The number of nonzero rows of \(A\) is exactly \(|E(C)|\). As \(|E(C)|<k\), the \(k\) columns of \(A\) are linearly dependent. Here, all entries of $A_I^\#$ are regarded as the elements of the fractional field of $S$. Consequently, \(A_I^\#(r)\) has zero determinant, because it contains these \(k\) linearly dependent columns. This contradiction shows that \(\Gamma\) is connected.

A connected graph with \(r\) vertices and \(r-1\) edges is a tree, so \(\Gamma\) is a tree on the vertex set \([r]\). Let \(i_1,\dots,i_r\) be a leaf ordering of \(\Gamma\). That is, for each \(j=2,\dots,r\), the induced subgraph of \(\Gamma\) on \(\{i_1,\dots,i_j\}\) is connected, and \(i_j\) is a leaf vertex of this subgraph. Thus, for every \(k=2,\dots,r\), there exists a unique vertex \(j_k\in \{i_1,\dots,i_{k-1}\}\) such that \(\{j_k,i_k\}\) forms an edge of the induced subgraph on \(\{i_1,\dots,i_k\}\). In particular, \(j_2=i_1\).

 For \(k = r, r-1, \dots, 2\), let \(B_k\) denote the submatrix of \(A_I^\#\) obtained by taking the entries lying at the intersections of rows indexed by \(\{j_2,i_2\},\dots,\{j_k,i_k\}\) and columns indexed by \(i_1,\dots,i_k\), with their original order preserved.  Note that the column indexed by $i_k$ of the matrix $B_k$ contains only a nonzero element, whose monomial part is $\frac{u_{j_k}}{\gcd(u_{j_k}, u_{i_k})}$. Expanding the determinant along the  column indexed by $i_r$ of $B_r(i_1)=A_I^\#(i_1)$, we obtain
\[
|\det(A_I^\#(i_1))|=\biggl(\frac{u_{j_r}}{\gcd(u_{j_r},u_{i_r})}\biggr)\bigl|\det\bigl(B_{r-1}(i_1)\bigr)\bigr|.
\]
Expanding the determinant along the column indexed by $i_{r-1}$ of $B_{r-1}(i_1)$, we obtain
\[
|\det(B_{r-1}(i_1))|=\biggl(\frac{u_{j_{r-1}}}{\gcd(u_{j_{r-1}},u_{i_{r-1}})}\biggr)\bigl|\det\bigl(B_{r-2}(i_1)\bigr)\bigr|.
\]
Continuing in this way, we finally get
\[
|\det(A_I^\#(i_1))|=\frac{u_{j_r}}{\gcd(u_{j_r},u_{i_r})}\cdot \frac{u_{j_{r-1}}}{\gcd(u_{j_{r-1}},u_{i_{r-1}})}\cdots \cdot\frac{u_{j_{3}}}{\gcd(u_{j_{3}},u_{i_3})}\cdot\frac{u_{i_1}}{\gcd(u_{i_1},u_{i_2})}.
\]
Since $|\det(A_I^\#(i_1))|=\frac{u_{i_1}}{g_r}$, it follows that
\[
g_r \cdot u_{j_r} \cdot u_{j_{r-1}} \cdots u_{j_3} = \gcd(u_{j_r},u_{i_r}) \cdot \gcd(u_{j_{r-1}},u_{i_{r-1}}) \cdots \gcd(u_{j_3},u_{i_3}) \cdot \gcd(u_{i_1},u_{i_2}).
\]
By Lemma~\ref{second}, we have that $u_{j_k}$ divides $\operatorname{lcm}(u_i, u_{i_k})$ for all $i=i_1,\ldots,i_{k-1}$ and each $k=2,\ldots,r$. From this, we conclude that $\operatorname{pd}(I)=1$ by Theorem~\ref{main}.

$(a)\Rightarrow (b)$  By Theorem~\ref{main}, there exists a linear ordering \(u_{i_1}>\ldots>u_{i_r}\) of \(G(I)\) such that for each \(2\leq k\leq r\),
there exists \( j_k\in \{i_1,\ldots,i_{k-1}\}\) with \(u_{j_k}\mid \operatorname{lcm}(u_{i},u_{i_k})\) for all \(i=i_1,\ldots, i_{k-1}\).
Let \(g_k=\gcd(u_{i_1},\ldots,u_{i_k})\) for \(k=2,\ldots,r\). For each integer \(2\le k\le r\), let \(B_k\) be the submatrix of \(A_I\) formed by the entries lying at the intersections of the rows indexed by \(\{i_1,i_2\}=\{j_2,i_2\},\,\{j_3,i_3\},\,\dots,\,\{j_k,i_k\}\) and the columns indexed by \(i_1,i_2,\ldots,i_k\).  Thus, \(B_k\) is a \((k-1)\times k\) matrix. Let \(B_k(i_j)\) denote the submatrix of \(B_k\) obtained by removing its  column indexed by $i_j$ for \(j=1,\ldots,k\). We claim that
\[
|\det\bigl(B_k(i_j)\bigr)|=\frac{u_{i_j}}{g_k}
\]
for all \(k=2,\ldots,r\) and \(j=1,\ldots,k\).

We prove this claim by induction on \(k\). For the base case \(k=2\), we have
\[
B_2=\left( \frac{\operatorname{lcm}(u_{i_1},u_{i_2})}{u_{i_1}},\; -\frac{\operatorname{lcm}(u_{i_1},u_{i_2})}{u_{i_2}} \right).
\]
Hence,
\[
|\det B_2(i_1)|=\frac{u_{i_1}}{g_2},\quad
|\det B_2(i_2)|=\frac{u_{i_2}}{g_2},
\]
which verifies the claim.

Now suppose \(3\leq k\leq r\). Then \(B_{k-1}\) is a submatrix of \(B_k\) obtained by deleting the row indexed by $\{j_k,i_k\}$ and the column indexed by $i_k$.
By the induction hypothesis,
\[
|\det\bigl(B_{k-1}(i_j)\bigr)|=\frac{u_{i_j}}{g_{k-1}}
\]
for \(j=1,\ldots,k-1\). Note that the column indexed by $i_k$ of $B_k(i_j)$ has only one nonzero entry, it follows from standard linear algebra that for each \(j=1,\dots,k-1\),
\[
\begin{split}
|\det\bigl(B_k(i_j)\bigr)|
&=\biggl(\frac{u_{j_k}}{\gcd(u_{j_k},u_{i_k})}\biggr)\det\bigl(B_{k-1}(j)\bigr) \\
&= \frac{u_{j_k}}{\gcd(u_{j_k},u_{i_k})}\cdot\frac{u_{i_j}}{g_{k-1}} \\
&= \frac{u_{i_j}}{g_k}.
\end{split}
\]
The last equality follows from Lemma~\ref{simple}. Note that the row indexed by $\{j_k,i_k\}$ of $B_k(i_k)$ has only one nonzero entry, expanding $B_k(i_k)$ along this row, we get
\[
|\det\bigl(B_k(i_k)\bigr)|
=\left(\frac{u_{i_k}}{\gcd(u_{j_k},u_{i_k})}\right)\frac{u_{j_k}}{g_{k-1}}
=\frac{u_{i_k}}{g_k}.
\]
The last equality also relies on Lemma~\ref{simple}.
The claim is therefore established.

Finally, set \(A_I^{\#}=B_r\). Then, we conclude by the claim:
\[
|\det\bigl(A_I^{\#}(i_j)\bigr)|=\frac{u_{i_j}}{g_r}
\]
for all \(j=1,\ldots,r\). Since $\{i_1,\ldots,i_r\}=\{1,\ldots,r\}$,  we obtain $|\det\bigl(A_I^{\#}(j)\bigr)|=\frac{u_{j}}{g_r}$ for all \(j=1,\ldots,r\), completing the proof.
\end{proof}

We remark that the proof of \((b)\Rightarrow (a)\) provides an alternative argument for the colon-ideal characterization of monomial ideals with projective dimension one. This demonstrates that such a colon-ideal characterization is essentially equivalent to the Hilbert-Burch Lemma.

\end{document}